\numberwithin{equation}{section}
\theoremstyle{definition}
\numberwithin{equation}{section}
\newcommand{\ncom}{\newcommand}
\ncom{\beq}{\begin{equation}}
\ncom{\eeq}{\end{equation}}
\ncom{\bea}{\begin{eqnarray*}}
\ncom{\eea}{\end{eqnarray*}}
\ncom{\beqa}{\begin{eqnarray}}
\ncom{\eeqa}{\end{eqnarray}}
\ncom{\nno}{\nonumber}
\ncom{\non}{\nonumber}
\ncom{\ds}{\displaystyle}
\ncom{\half}{\frac{1}{2}}
\ncom{\mbx}{\makebox{.25cm}}
\ncom{\hs}{\mbox{\hspace{.25cm}}}
\ncom{\rar}{\rightarrow}
\ncom{\Rar}{\Rightarrow}
\ncom{\noin}{\noindent}
\ncom{\bc}{\begin{center}}
\ncom{\ec}{\end{center}}
\ncom{\sz}{\scriptsize}
\ncom{\rf}{\ref}
\ncom{\s}{\sqrt{2}}
\ncom{\sgm}{\sigma}
\ncom{\Sgm}{\Sigma}
\ncom{\psgm}{\sigma^{\prime}}
\ncom{\dt}{\delta}
\ncom{\Dt}{\Delta}
\ncom{\lmd}{\lambda}
\ncom{\Lmd}{\Lambda}
\ncom{\Th}{\Theta}
\ncom{\e}{\eta}
\ncom{\eps}{\epsilon}
\ncom{\pcc}{\stackrel{P}{>}}
\ncom{\lp}{\stackrel{L_{p}}{>}}
\ncom{\dist}{{\rm\,dist}}
\ncom{\sspan}{{\rm\,span}}
\ncom{\re}{{\rm Re\,}}
\ncom{\im}{{\rm Im\,}}
\ncom{\sgn}{{\rm sgn\,}}
\ncom{\ba}{\begin{array}}
\ncom{\ea}{\end{array}}
\ncom{\hone}{\mbox{\hspace{1em}}}
\ncom{\htwo}{\mbox{\hspace{2em}}}
\ncom{\hthree}{\mbox{\hspace{3em}}}
\ncom{\hfour}{\mbox{\hspace{4em}}}
\ncom{\vone}{\vskip 2ex}
\ncom{\vtwo}{\vskip 4ex}
\ncom{\vonee}{\vskip 1.5ex}
\ncom{\vthree}{\vskip 6ex}
\ncom{\vfour}{\vspace*{8ex}}
\ncom{\norm}{\|\;\;\|}
\ncom{\integ}[4]{\int_{#1}^{#2}\,{#3}\,d{#4}}
\ncom{\vspan}[1]{{{\rm\,span}\{ #1 \}}}
\ncom{\dm}[1]{ {\displaystyle{#1} } }
\ncom{\ri}[1]{{#1} \index{#1}}
\newtheorem{theorem}{\bf Theorem}[section]
\newtheorem{remark}{\bf Remark}[section]
\newtheorem{lemma}{Lemma}[section]
\newtheorem{corollary}{Corollary}[section]
\newtheoremstyle
    {remarkstyle}
    {}
    {11pt}
    {}
    {}
    {\bfseries}
    {:}
    {     }
    {\thmname{#1} \thmnumber{#2} }
\theoremstyle{remarkstyle}
\def\eps{\varepsilon}
\begin{document}
\title{On Distributions of Certain State Dependent Fractional Point Processes}
\author[Kuldeep Kumar Kataria]{K. K. Kataria}
\address{Kuldeep Kumar Kataria, Department of Mathematics,
 Indian Institute of Technology Bombay, Powai, Mumbai 400076, INDIA.}
 \email{kulkat@math.iitb.ac.in}
\author{P. Vellaisamy}
\address{P. Vellaisamy, Department of Mathematics,
 Indian Institute of Technology Bombay, Powai, Mumbai 400076, INDIA.}
 \email{pv@math.iitb.ac.in}
\thanks{The research of K. K. Kataria was supported by IRCC, IIT Bombay.}
\subjclass[2010]{Primary : 60G22; Secondary: 60G55}
\keywords{state dependent fractional pure birth process; state dependent time fractional Poisson process; time fractional Poisson process.}
\date{January 04, 2018}
\begin{abstract}
We obtain the explicit expressions for the state probabilities of various state dependent fractional point processes recently introduced and studied by Garra {\it et al.} (2015). The inversion of the Laplace transforms of the state probabilities of such processes is rather cumbersome and involved. We employ the Adomian decomposition method to solve the difference differential equations governing the state probabilities of these state dependent processes. The distributions of some convolutions of the Mittag-Leffler random variables are derived as special cases of the obtained results.
\end{abstract}

\maketitle
\section{Introduction}
The Poisson process is an important counting process which has applications in several fields. A characterization of the Poisson process can be given in terms of Kolmogorov equations. A stochastic process $\{N(t,\lambda)\}_{t\geq0}$ is said to be a Poisson process with intensity $\lambda>0$ if the process has independent and stationary increments, and the state probabilities $p(n,t)=\mathrm{Pr}\{N(t,\lambda)=n\}$ satisfy 
\begin{equation}\label{tedft}
\frac{\mathrm{d}}{\mathrm{dt}}p(n,t)=-\lambda(p(n,t)-p(n-1,t)),\ \ n\geq 0,
\end{equation}
with initial condition $p(0,0)=1$, and $p(-1,t)=0$, $t\geq 0$. The conditions $p(0,0)=1$, $N(0,\lambda)=0$ a.s. and $p(n,0)=0$ for all $n\geq 1$ are essentially equivalent.

Recently, many authors introduced various fractional generalizations of the homogeneous Poisson process by replacing the time derivative in difference differential equations (\ref{tedft}) by various fractional derivatives such as Riemann-Liouville fractional derivative (see Laskin (2003)), Caputo fractional derivative (see Beghin and Orsingher (2009)), Prabhakar derivative (see Polito and Scalas (2016)), Saigo fractional derivative (see Kataria and Vellaisamy (2017)), {\it etc}. For space fractional generalization of the classical homogeneous Poisson process, we refer to Orsingher and Polito (2012), where space fractional Poisson process (SFPP), and space and time fractional Poisson process (STFPP) are introduced and studied. The distributions of these generalized Poisson processes are generally obtained by evaluating the respective Laplace transforms. However, in certain cases inversion of the Laplace transform is not at all simple. Some recently introduced state dependent fractional point processes by Garra {\it et al.} (2015) are illustrative examples.

Garra {\it et al.} (2015) studied three state dependent fractional point processes, where the difference differential equations governing the state probabilities of introduced processes depend on the number of events that have occurred till time $t\geq0$. 

The first version of the state dependent time fractional Poisson process (SDTFPP-I) $\{N_1(t,\lambda)\}_{t\geq0}$, $\lambda>0$, is defined as the stochastic process whose probability mass function (pmf) $p^{\alpha_n}(n,t)=\mathrm{Pr}\{N_1(t,\lambda)=n\}$, satisfies (see Eq. (1.1), Garra {\it et al.} (2015))
\begin{equation}\label{rc2newq}
\partial_t^{\alpha_n} p^{\alpha_n}(n,t)=-\lambda(p^{\alpha_n}(n,t)-p^{\alpha_{n-1}}(n-1,t)),\ \ 0<\alpha_n\leq 1,\ n\geq 0,
\end{equation}
with $p^{\alpha_{-1}}(-1,t)=0$, $t\geq 0$, and the initial conditions $p^{\alpha_n}(0,0)=1$ and $p^{\alpha_n}(n,0)=0$, $n\geq 1$. For each $n\geq0$, $\partial_t^{\alpha_n}$ denotes the fractional derivative in Caputo sense which is defined as
\begin{equation}\label{capt}
\partial_t^{{\alpha_n}}f(t):=\left\{
\begin{array}{ll}
\frac{1}{\Gamma{(1-{\alpha_n})}}\int^t_{0} (t-s)^{-{\alpha_n}}f'(s)\,\mathrm{d}s,\ \ 0<{\alpha_n}<1,\\\\
f'(t),\ \ {\alpha_n}=1.
\end{array}
\right.
\end{equation}
The order of the Caputo derivative in difference differential equations (\ref{rc2newq}) depends on the number of events till time $t$. The Laplace transform of the state probabilities of SDTFPP-I is given by
\begin{equation}\label{lt1}
\tilde{p}^{\alpha_n}(n,s)=\int_0^{\infty}p^{\alpha_n}(n,t)e^{-st}\,\mathrm{d}t=\frac{\lambda^ns^{\alpha_0-1}}{\prod_{k=0}^n(s^{\alpha_k}+\lambda)},\ \ s>0.
\end{equation}

The second version of the state dependent time fractional Poisson process (SDTFPP-II) $\{N_2(t,\lambda)\}_{t\geq0}$, $\lambda>0$, is defined as the stochastic process with independent but nonidentically distributed waiting times $W_n$ (see Section 3, Garra {\it et al.} (2015)) with $\mathrm{Pr}\{W_n>t\}=E_{\beta_n}(-\lambda t^{\beta_n})$, $0<\beta_n\leq 1$, where $E_{\beta_n}(.)$ is the Mittag-Leffler function defined by
\begin{equation}\label{fastt}
E_{\beta_n}(x)=\sum_{k=0}^{\infty‎}\frac{x^k}{\Gamma(k\beta_n+1)},\ x\in\mathbb{R}.
\end{equation}
Let $I_t^{\beta_n}$ denote the Riemann-Liouville (RL) fractional integral of order $\beta_n$, $n\geq0$, defined by
\begin{equation*}\label{rli}
	I^{\beta_n}_tf(t):=\frac{1}{\Gamma{({\beta_n})}}\int^t_{0} (t-s)^{{\beta_n}-1}f(s)\,\mathrm{d}s.
\end{equation*}
Note that $\partial^{\beta}_tf(t)=I^{1-\beta}_tf'(t)$, $0<\beta<1$. Garra {\it et al.} (2015) showed that the pmf $p^{\beta_n}(n,t)=\mathrm{Pr}\{N_2(t,\lambda)=n\}$ of SDTFPP-II satisfies
\begin{equation}\label{r2newq}
p^{\beta_n}(n,t)=p^{\beta_n}(n,0)-\lambda(I_t^{\beta_n}p^{\beta_n}(n,t)-I_t^{\beta_{n-1}}p^{\beta_{n-1}}(n-1,t)),\ \ n\geq 0,
\end{equation}
with $p^{\beta_{-1}}(-1,t)=0$, $t\geq 0$, and the initial conditions $p^{\beta_n}(0,0)=1$ and $p^{\beta_n}(n,0)=0$, $n\geq 1$. The Laplace transform of the state probabilities of SDTFPP-II is given by
\begin{equation}\label{lt2}
\tilde{p}^{\beta_n}(n,s)=\int_0^{\infty}p^{\beta_n}(n,t)e^{-st}\,\mathrm{d}t=\frac{\lambda^ns^{\beta_n-1}}{\prod_{k=0}^n(s^{\beta_k}+\lambda)},\ \ s>0.
\end{equation}

When $\alpha_{n}=\alpha$ and $\beta_{n}=\beta$ for all $n\geq0$, SDTFPP-I and SDTFPP-II reduce to the time fractional Poisson process (TFPP) studied by Beghin and Orsingher (2009). Further, the case $\alpha_{n}=1$ and $\beta_{n}=1$ for all $n\geq0$, gives the classical homogeneous Poisson process.

A fractional version of the classical nonlinear birth process, namely, fractional pure birth process (FPBP) was introduced by Orsingher and Polito (2010). Garra {\it et al.} (2015) studied a third fractional point process by introducing the state dependency in the difference differential equations governing the state probabilities of FPBP. The state dependent fractional pure birth process (SDFPBP) $\{\mathcal{N}(t,\lambda_n)\}_{t\geq0}$, $\lambda_n>0$, is defined as the stochastic process whose pmf $p^{\nu_n}(n,t)=\mathrm{Pr}\{\mathcal{N}(t,\lambda_n)=n\}$ satisfies (see Eq. (4.1), Garra {\it et al.} (2015))
\begin{equation}\label{gh22}
\partial_t^{\nu_n} p^{\nu_n}(n,t)=-\lambda_np^{\nu_n}(n,t)+\lambda_{n-1}p^{\nu_{n-1}}(n-1,t),\ \ 0<\nu_n\leq 1,\ n\geq 1,
\end{equation}
with $p^{\nu_{0}}(0,t)=0$, $t\geq 0$, and the initial conditions $p^{\nu_1}(1,0)=1$ and $p^{\nu_n}(n,0)=0$, $n\geq 2$. The Laplace transform of the state probabilities of SDFPBP is given by
\begin{equation}\label{lt3}
\tilde{p}^{\nu_n}(n,s)=\int_0^{\infty}p^{\nu_n}(n,t)e^{-st}\,\mathrm{d}t=\frac{s^{\nu_1-1}\prod_{k=1}^{n-1}\lambda_k}{\prod_{k=1}^n(s^{\nu_k}+\lambda_k)},\ \ s>0.
\end{equation}
More recently, the semi-Markovian nature of these state-dependent processes are studied in Orsingher {\it et al.} (2017), and Ricciuti and Toaldo (2017). They studied some semi-Markov processes and their connection with state-dependent models.

The state probabilities of SDTFPP-I, SDTFPP-II and SDFPBP can be obtained by inverting the Laplace transforms given by (\ref{lt1}), (\ref{lt2}) and (\ref{lt3}) respectively. However, the inversion of the Laplace transforms is rather involved and cumbersome, as mentioned in Garra {\it et al.} (2015). Only the explicit expressions for $p^{\alpha_{0}}(0,t)$ and $p^{\alpha_{1}}(1,t)$ in terms of the generalized Mittag-Leffler function were obtained by them for SDTFPP-I process.

In this paper, we obtain explicit expressions for the state probabilities $p^{\alpha_{n}}(n,t)$, $p^{\beta_{n}}(n,t)$ and $p^{\nu_{n}}(n,t)$ of SDTFPP-I, SDTFPP-II and SDFPBP for all $n$, by using the Adomian decomposition method (ADM). This method has an advantage over existing methods in obtaining the state probabilities of such processes. We also establish some additional results for these processes. In particular, the distributions of the convolutions of Mittag-Leffler random variables are obtained as special cases.

\section{Adomian decomposition method}
In this section, we briefly describe the mechanism of ADM. Consider a functional equation of the form
\begin{equation}\label{1.1}
u=f+L(u)+H(u),
\end{equation}
where $f$ is some known function, and $L$ and $H$ are linear and nonlinear operators respectively. In ADM (see Adomian (1986)), the solution of the above functional equation is expressed in the form of an infinite series
\begin{equation}\label{1.2}
u(x,t)=‎‎\sum_{n=0}^{\infty‎}u_n(x,t),
\end{equation}
and the nonlinear term $H(u)$ is assumed to decompose as
\begin{equation}\label{1.3}
H(u)=‎‎\sum_{n=0}^{\infty‎}A_n(u_0,u_1,\ldots,u_n),
\end{equation}
where $A_n$ denotes the $n$-th Adomian polynomial in $u_0,u_1,\ldots,u_n$. Also, the series (\ref{1.2}) and (\ref{1.3}) are assumed to be absolutely convergent. So, (\ref{1.1}) can be rewritten as
\begin{equation*}\label{1.4}
‎‎\sum_{n=0}^{\infty‎}‎u_n=f+‎‎\sum_{n=0}^{\infty‎}L(u_n)+‎‎\sum_{n=0}^{\infty‎}A_n(u_0,u_1,\ldots,u_n).
\end{equation*}
Thus, in accordance with ADM, $u_n$'s are obtained by the following recursive relations
\begin{equation}\label{1.5}
u_0=f\ \ \ \ \mathrm{and}\ \ \ \ u_n=L(u_{n-1})+A_{n-1}(u_0,u_1,\ldots,u_{n-1}),\ \ n\geq 1.
\end{equation}

The only difficult but crucial step involved in ADM is the computation of Adomian polynomials. For more details on these polynomials we refer to Rach (1984), Duan (2010) and Kataria and Vellaisamy (2016). In the absence of the nonlinear term $H(u)$, the ADM can be used effectively as the recursive relationship (\ref{1.5}) then simply reduces to $u_n=L(u_{n-1})$ with $u_0=f$.

Kataria and Vellaisamy (2017) obtained the distribution of Saigo space time fractional Poisson process (SSTFPP) via ADM, which was otherwise difficult to obtain using the prevalent method of inverting Laplace transform. As special case, the state probabilities of TFPP, SFPP and STFPP follow easily.

Note that the functional equations corresponding to the difference differential equations (\ref{rc2newq}), (\ref{r2newq}) and (\ref{gh22}) of SDTFPP-I, SDTFPP-II and SDFPBP respectively do not involve any nonlinear term. Hence, the ADM conveniently and rapidly gives the state probabilities as the series solutions of the corresponding difference differential equations.

\section{State dependent time fractional Poisson process-I}
The state probabilities $p^{\alpha_n}(n,t)$ of SDTFPP-I can be obtained by inverting its Laplace transform given by (\ref{lt1}). Garra {\it et al.} (2015) obtained the explicit expressions only for $p^{\alpha_0}(0,t)$ and $p^{\alpha_1}(1,t)$, as it's difficult to obtain the state probabilities for $n\geq2$ using the Laplace transform method.  Also, the distribution of SDTFPP-I can be expressed in terms of stable and inverse stable subordinators (see Eq. 2.12, Garra {\it et al.} (2015)). Here we obtain the explicit expressions for state probabilities of SDTFPP-I via ADM.

The following lemma will be used (see Eq. 2.1.16, Kilbas {\it et. al.} (2006)).
\begin{lemma}
	Let $I^\alpha_t$ be the RL fractional integral of order $\alpha>0$. Then for $\rho> -1$,
	\begin{equation*}
	I^\alpha_tt^{\rho}=\frac{\Gamma(\rho+1)}{\Gamma(\rho+\alpha+1)}t^{\rho+\alpha}.
	\end{equation*}
\end{lemma}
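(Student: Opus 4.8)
The plan is to compute the integral directly from the definition of the Riemann--Liouville fractional integral and reduce it to a Beta integral. Writing out the definition with $f(s)=s^{\rho}$ gives
\begin{equation*}
I^\alpha_t t^{\rho}=\frac{1}{\Gamma(\alpha)}\int_0^t (t-s)^{\alpha-1}s^{\rho}\,\mathrm{d}s.
\end{equation*}
The key step is the substitution $s=tu$, $\mathrm{d}s=t\,\mathrm{d}u$, under which the limits become $u\in[0,1]$ and the integrand factors cleanly as $(t-s)^{\alpha-1}=t^{\alpha-1}(1-u)^{\alpha-1}$ and $s^{\rho}=t^{\rho}u^{\rho}$. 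Pulling all powers of $t$ outside the integral leaves
\begin{equation*}
I^\alpha_t t^{\rho}=\frac{t^{\rho+\alpha}}{\Gamma(\alpha)}\int_0^1 u^{\rho}(1-u)^{\alpha-1}\,\mathrm{d}u.
\end{equation*}

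The remaining integral is exactly the Euler Beta function $B(\rho+1,\alpha)$. Here the hypotheses enter: convergence at $u=0$ requires $\rho+1>0$, i.e.\ $\rho>-1$, while convergence at $u=1$ together with the well-definedness of the fractional integral requires $\alpha>0$, precisely the stated assumptions. I would then invoke the standard identity $B(\rho+1,\alpha)=\Gamma(\rho+1)\Gamma(\alpha)/\Gamma(\rho+\alpha+1)$, after which the factor $\Gamma(\alpha)$ cancels against the prefactor $1/\Gamma(\alpha)$ and yields
\begin{equation*}
I^\alpha_t t^{\rho}=\frac{\Gamma(\rho+1)}{\Gamma(\rho+\alpha+1)}\,t^{\rho+\alpha},
\end{equation*}
as claimed.

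There is no genuine obstacle here: the result is a textbook computation, and its only subtlety is bookkeeping the convergence conditions $\rho>-1$ and $\alpha>0$ that guarantee the Beta integral is finite. If one prefers to avoid quoting the Beta--Gamma relation as a black box, one could instead verify the formula by Laplace transform, using that the Laplace transform of $t^{\rho}$ is $\Gamma(\rho+1)s^{-\rho-1}$ and that fractional integration corresponds to multiplication by $s^{-\alpha}$ in the transform domain; matching transforms gives the same identity. Either route is short, so I would present the direct Beta-integral argument as the cleanest.
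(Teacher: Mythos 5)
Your proof is correct, and in fact the paper offers no proof of this lemma at all: it simply cites the identity from Kilbas \emph{et al.} (2006), Eq.\ 2.1.16. Your Beta-integral computation (substitution $s=tu$, then $B(\rho+1,\alpha)=\Gamma(\rho+1)\Gamma(\alpha)/\Gamma(\rho+\alpha+1)$) is the standard argument behind that cited formula, and you correctly account for where the hypotheses $\rho>-1$ and $\alpha>0$ are needed for convergence, so nothing is missing.
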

Also, the following holds for RL integral and the Caputo derivative (see Eq. 2.4.44, Kilbas {\it et. al.} (2006))
\begin{equation*}
I^\alpha_t\partial^\alpha_tf(t)=f(t)-f(0),\ \ 0<\alpha\leq 1.
\end{equation*}

In subsequent part of the paper, the set of nonnegative integers is denoted by $\mathbb{N}_0$.
\begin{theorem}\label{t1}
	Consider the following difference-differential equations governing the state probabilities of SDTFPP-I:
	\begin{equation}\label{r2new}
	\partial_t^{\alpha_n} p^{\alpha_n}(n,t)=-\lambda(p^{\alpha_n}(n,t)-p^{\alpha_{n-1}}(n-1,t)),\ \ 0<\alpha_n\leq 1,\ \lambda>0,\ n\geq 0,
	\end{equation}
	with $p^{\alpha_0}(0,0)=1$ and $p^{\alpha_n}(n,0)=0$, $n\geq 1$. The solution of (\ref{r2new}) is given by
	\begin{equation}\label{2.4kky}
	p^{\alpha_n}(n,t)=(-1)^n\sum_{k=n}^{\infty‎}(-\lambda)^k\underset{\Theta^k_n}{\sum}\frac{t^{\sum_{j=0}^nk_j\alpha_j}}{\Gamma(1+\sum_{j=0}^nk_j\alpha_j)},\ \ n\geq0,
	\end{equation}
	where 
	$
	\Theta^k_n=\{(k_0,k_1,\ldots,k_n):\ \sum_{j=0}^nk_j=k,\ k_0\in\mathbb{N}_0,\ k_j\in\mathbb{N}_0\backslash\{0\},\ 1\leq j\leq n\}.
	$
\end{theorem}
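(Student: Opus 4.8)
The plan is to turn the fractional differential equation into an equivalent Volterra-type functional equation, read it through the ADM template (\ref{1.1})--(\ref{1.5}), and then prove the closed form (\ref{2.4kky}) by induction on $n$. First I would apply the RL integral $I_t^{\alpha_n}$ to both sides of (\ref{r2new}) and use the identity $I_t^{\alpha_n}\partial_t^{\alpha_n}f(t)=f(t)-f(0)$ recorded just before the theorem to get
\begin{equation*}
p^{\alpha_n}(n,t)=p^{\alpha_n}(n,0)-\lambda I_t^{\alpha_n}p^{\alpha_n}(n,t)+\lambda I_t^{\alpha_n}p^{\alpha_{n-1}}(n-1,t).
\end{equation*}
This has the form (\ref{1.1}) with no nonlinear part ($H\equiv0$), linear operator $L(u)=-\lambda I_t^{\alpha_n}u$, and source $f=p^{\alpha_n}(n,0)+\lambda I_t^{\alpha_n}p^{\alpha_{n-1}}(n-1,t)$. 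Hence the recursion (\ref{1.5}) collapses to $u_0=f$ and $u_m=L(u_{m-1})=(-\lambda I_t^{\alpha_n})^m f$, so that $p^{\alpha_n}(n,t)=\sum_{m\geq0}(-\lambda I_t^{\alpha_n})^m f$.

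For the base case $n=0$ one has $f=1$ (since $p^{\alpha_{-1}}(-1,t)=0$ and $p^{\alpha_0}(0,0)=1$), and repeated use of the preceding lemma gives $u_m=(-\lambda)^m I_t^{m\alpha_0}1=(-\lambda)^m t^{m\alpha_0}/\Gamma(1+m\alpha_0)$. Summing over $m$ reproduces (\ref{2.4kky}) for $n=0$, because $\Theta^k_0$ is the singleton $\{(k)\}$; the resulting series is precisely $E_{\alpha_0}(-\lambda t^{\alpha_0})$.

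For the inductive step I would assume (\ref{2.4kky}) for $n-1$. When $n\geq1$ the initial value vanishes, so $f=\lambda I_t^{\alpha_n}p^{\alpha_{n-1}}(n-1,t)$ and
\begin{equation*}
p^{\alpha_n}(n,t)=\sum_{m=0}^{\infty}(-1)^m\lambda^{m+1}(I_t^{\alpha_n})^{m+1}p^{\alpha_{n-1}}(n-1,t).
\end{equation*}
Substituting the induction hypothesis and applying the preceding lemma iteratively to each monomial $t^{S}/\Gamma(1+S)$ with $S=\sum_{j=0}^{n-1}k_j\alpha_j$ telescopes the Gamma factors and produces $t^{S+(m+1)\alpha_n}/\Gamma(1+S+(m+1)\alpha_n)$.

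The crucial and most delicate step is the re-indexing. Setting $k_n=m+1\geq1$, a short sign count gives $(-1)^m\lambda^{m+1}(-1)^{n-1}(-\lambda)^k=(-1)^n(-\lambda)^{k+k_n}$ together with $S+k_n\alpha_n=\sum_{j=0}^nk_j\alpha_j$. I would then note that appending an index $k_n\geq1$ to a tuple $(k_0,\ldots,k_{n-1})\in\Theta^k_{n-1}$ produces exactly the tuples of $\Theta^{K}_n$ with $K=k+k_n$, since the constraints $k_0\in\mathbb{N}_0$ and $k_j\in\mathbb{N}_0\setminus\{0\}$ for $1\le j\le n$ are preserved and the new total equals $K$; as $k\geq n-1$ and $k_n\geq1$, the index $K$ ranges over $\{n,n+1,\ldots\}$. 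Collapsing the double sum over $(k,k_n)$ into a single sum over $K$ then yields (\ref{2.4kky}) for $n$, completing the induction. The main obstacle is precisely this bookkeeping — matching $\Theta^k_{n-1}\times\{k_n\geq1\}$ with $\Theta^K_n$ while correctly tracking the alternating signs and powers of $\lambda$ — and the interchange of the $m$- and $k$-summations it requires, which is justified by the absolute convergence guaranteed by the Mittag-Leffler-type decay of the $1/\Gamma$ coefficients.
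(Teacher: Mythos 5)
Your proposal is correct, and although it sits in the same overall framework as the paper's proof (apply $I_t^{\alpha_n}$ to get the integral equation (\ref{yth}), invoke ADM, induct on $n$), the decomposition you choose is genuinely different, and so is the resulting bookkeeping. The paper keeps only the initial value in the ADM source and handles the coupling term by decomposing the previously computed level as a series, pairing components index by index: its recursion is $p^{\alpha_n}_k(n,t)=-\lambda I_t^{\alpha_n}\bigl(p^{\alpha_n}_{k-1}(n,t)-p^{\alpha_{n-1}}_{k-1}(n-1,t)\bigr)$, so each component carries a single power $(-\lambda)^k$, the proof needs an inner induction on the ADM index $k$ (after explicit warm-up cases $n=0,1,2,3$), and the combinatorial engine is the one-step recursion of $\Theta^k_n$: a tuple either has $k_n\geq 2$ (increment the last entry of a tuple in $\Theta^{k-1}_n$) or $k_n=1$ (append $1$ to a tuple in $\Theta^{k-1}_{n-1}$). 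You instead absorb the entire known term $\lambda I_t^{\alpha_n}p^{\alpha_{n-1}}(n-1,t)$ into the source $f$, so the recursion (\ref{1.5}) collapses to the Neumann series $\sum_m(-\lambda I_t^{\alpha_n})^m f$; your $m$-th component mixes all powers of $\lambda$, the terms are instead grouped by the number $k_n=m+1$ of $\alpha_n$-integrations, and the combinatorial engine is the single bijection between $\bigcup_{k\geq n-1}\Theta^k_{n-1}\times\{k_n\geq 1\}$ and $\bigcup_{K\geq n}\Theta^K_n$ obtained by appending the last coordinate (onto because every tuple of $\Theta^K_n$ truncates to one of $\Theta^{K-k_n}_{n-1}$, with $K-k_n\geq n-1$ automatic). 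Your route buys a shorter argument with one induction instead of two and makes transparent that the solution is the resolvent $(I+\lambda I_t^{\alpha_n})^{-1}$ applied level by level, mirroring in the time domain the factorization $\prod_{k=0}^n(s^{\alpha_k}+\lambda)^{-1}$ of (\ref{lt1}); the price is that you must rearrange a genuine double series (over $m$ and over the level-$(n-1)$ indices), so the absolute-convergence remark you make is an actual ingredient of the proof and should be stated as such, whereas the paper's component-by-component matching only ever combines two finite sums at each step and never rearranges an infinite series within a step.
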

\begin{proof}
	Applying RL integral $I^{\alpha_n}_t$ on both sides of (\ref{r2new}), we get
	\begin{equation}\label{yth}
	p^{\alpha_n}(n,t)=p^{\alpha_n}(n,0)-\lambda I_t^{\alpha_n}(p^{\alpha_n}(n,t)-p^{\alpha_{n-1}}(n-1,t)),\ \ n\geq 0.
	\end{equation}
	
	Note that $p^{\alpha_{-1}}(-1,t)=0$ for $t\geq0$. For $n=0$, substitute $p^{\alpha_0}(0,t)=\sum_{k=0}^{\infty}p^{\alpha_0}_{k}(0,t)$ in (\ref{yth}) and apply ADM to get
	\begin{equation*}
	\sum_{k=0}^{\infty}p^{\alpha_0}_{k}(0,t)=p^{\alpha_0}(0,0)-\lambda\sum_{k=0}^{\infty} I_t^{\alpha_0} p^{\alpha_0}_{k}(0,t).
	\end{equation*}
	Thus, $p^{\alpha_0}_{0}(0,t)=p^{\alpha_0}(0,0)=1$ for all $t>0$ and $p^{\alpha_0}_{k}(0,t)=-\lambda I_t^{\alpha_0} p^{\alpha_0}_{k-1}(0,t)$, $k\geq 1$. Hence,
	\begin{equation*}
	p^{\alpha_0}_{1}(0,t)=-\lambda I_t^{\alpha_0} p^{\alpha_0}_{0}(0,t)=-\lambda I_t^{\alpha_0} t^0=\frac{-\lambda t^{\alpha_0}}{\Gamma(\alpha_0+1)},
	\end{equation*}
	and similarly 
	\begin{equation*}
	p^{\alpha_0}_{2}(0,t)=\frac{(-\lambda t^{\alpha_0})^2}{\Gamma(2\alpha_0+1)},\ \ p^{\alpha_0}_{3}(0,t)=\frac{(-\lambda t^{\alpha_0})^3}{\Gamma(3\alpha_0+1)},
	\end{equation*}
	and {\it etc}. Let
	\begin{equation}
	p^{\alpha_0}_{k-1}(0,t)=\frac{(-\lambda t^{\alpha_0})^{k-1}}{\Gamma((k-1){\alpha_0}+1)}.
	\end{equation}
	Then
	\begin{equation*}
	p^{\alpha_0}_{k}(0,t)=-\lambda I_t^{\alpha_0} p^{\alpha_0}_{k-1}(0,t)=\frac{(-\lambda)^{k}}{\Gamma((k-1){\alpha_0}+1)} I_t^{\alpha_0} t^{(k-1){\alpha_0}}=\frac{(-\lambda t^{\alpha_0})^{k}}{\Gamma(k\alpha_0+1)},\ \ k\geq 0.
	\end{equation*}
	Therefore,
	\begin{equation}
	p^{\alpha_0}(0,t)=\sum_{k=0}^{\infty}\frac{(-\lambda t^{\alpha_0})^{k}}{\Gamma(k{\alpha_0}+1)},
	\end{equation}
	and thus the result holds for $n=0$.
	
	For $n=1$, substituting $p^{\alpha_1}(1,t)=\sum_{k=0}^{\infty}p^{\alpha_1}_{k}(1,t)$ in (\ref{yth}) and applying ADM, we get
	\begin{equation*}
	\sum_{k=0}^{\infty}p^{\alpha_1}_{k}(1,t)=p^{\alpha_1}(1,0)-\lambda\sum_{k=0}^{\infty} I_t^{\alpha_1} \left(p^{\alpha_1}_{k}(1,t)-p^{\alpha_0}_{k}(0,t)\right).
	\end{equation*}
	Thus, $p^{\alpha_1}_{0}(1,t)=p^{\alpha_1}(1,0)=0$ and $p^{\alpha_1}_{k}(1,t)=-\lambda I_t^{\alpha_1} \left(p^{\alpha_1}_{k-1}(1,t)-p^{\alpha_0}_{k-1}(0,t)\right)$, $k\geq 1$.\\
	Hence,
	\begin{align*}
	p^{\alpha_1}_{1}(1,t)&=-\lambda I_t^{\alpha_1} \left(p^{\alpha_1}_{0}(1,t)-p^{\alpha_0}_{0}(0,t)\right)=\lambda I_t^{\alpha_1} t^0=-(-\lambda)\frac{ t^{\alpha_1}}{\Gamma({\alpha_1}+1)},\\
	p^{\alpha_1}_{2}(1,t)&=-\lambda I_t^{\alpha_1} \left(p^{\alpha_1}_{1}(1,t)-p^{\alpha_0}_{1}(0,t)\right)\\
	&=-(-\lambda)^2 \left(\frac{ t^{2\alpha_1}}{\Gamma(2\alpha_1+1)}+\frac{ t^{\alpha_0+\alpha_1}}{\Gamma(\alpha_0+\alpha_1+1)}\right),\\
	p^{\alpha_1}_{3}(1,t)&=-\lambda I_t^{\alpha_1} \left(p^{\alpha_1}_{2}(1,t)-p^{\alpha_0}_{2}(0,t)\right)\\
	&=-(-\lambda)^3 \left(\frac{ t^{3\alpha_1}}{\Gamma(3\alpha_1+1)}+\frac{ t^{\alpha_0+2\alpha_1}}{\Gamma(\alpha_0+2\alpha_1+1)}+\frac{ t^{2\alpha_0+\alpha_1}}{\Gamma(2\alpha_0+\alpha_1+1)}\right).
	\end{align*}
	Let
	\begin{equation}
	p^{\alpha_1}_{k-1}(1,t)=-(-\lambda)^{k-1}\underset{\Theta^{k-1}_{1}}{\sum}\frac{t^{k_0\alpha_0+k_1\alpha_1}}{\Gamma\left(k_0\alpha_0+k_1\alpha_1+1\right)}.
	\end{equation}
	Then
	\begin{align*}
	p^{\alpha_1}_{k}(1,t)&=-\lambda I_t^{\alpha_1} \left(p^{\alpha_1}_{k-1}(1,t)-p^{\alpha_0}_{k-1}(0,t)\right)\\
	&=-(-\lambda)^k\left(\underset{\Theta^{k-1}_{1}}{\sum}\frac{t^{k_0\alpha_0+k_1\alpha_1+\alpha_1}}{\Gamma\left(k_0\alpha_0+k_1\alpha_1+\alpha_1+1\right)}+\frac{t^{(k-1)\alpha_0+\alpha_1}}{\Gamma((k-1){\alpha_0}+\alpha_1+1)}\right)\\
	&=-(-\lambda)^k\underset{\Theta^{k}_{1}}{\sum}\frac{t^{k_0\alpha_0+k_1\alpha_1}}{\Gamma\left(k_0\alpha_0+k_1\alpha_1+1\right)},\ \ k\geq 1.
	\end{align*}
	Therefore,
	\begin{equation}
	p^{\alpha_1}(1,t)=-\sum_{k=1}^{\infty}(-\lambda)^k\underset{\Theta^{k}_{1}}{\sum}\frac{t^{k_0\alpha_0+k_1\alpha_1}}{\Gamma\left(k_0\alpha_0+k_1\alpha_1+1\right)},
	\end{equation}
	and thus the result holds for $n=1$.
	
	For $n=2$, substituting $p^{\alpha_2}(2,t)=\sum_{k=0}^{\infty}p^{\alpha_2}_{k}(2,t)$ in (\ref{yth}) and applying ADM, we get
	\begin{equation*}
	\sum_{k=0}^{\infty}p^{\alpha_2}_{k}(2,t)=p^{\alpha_2}(2,0)-\lambda\sum_{k=0}^{\infty} I_t^{\alpha_2} \left(p^{\alpha_2}_{k}(2,t)-p^{\alpha_1}_{k}(1,t)\right).
	\end{equation*}
	Thus, $p^{\alpha_2}_{0}(2,t)=p^{\alpha_2}(2,0)=0$ and $p^{\alpha_2}_{k}(2,t)=-\lambda I_t^{\alpha_2} \left(p^{\alpha_2}_{k-1}(2,t)-p^{\alpha_1}_{k-1}(1,t)\right)$, $k\geq 1$.\\
	Hence,
	\begin{align*}
	p^{\alpha_2}_{1}(2,t)&=-\lambda I_t^{\alpha_2} \left(p^{\alpha_2}_{0}(2,t)-p^{\alpha_1}_{0}(1,t)\right)=0,\\
	p^{\alpha_2}_{2}(2,t)&=-\lambda I_t^{\alpha_2} \left(p^{\alpha_2}_{1}(2,t)-p^{\alpha_1}_{1}(1,t)\right)=-\lambda I_t^{\alpha_2}\left(-\frac{\lambda t^{\alpha_1}}{\Gamma({\alpha_1}+1)}\right)=(-\lambda)^2\frac{ t^{\alpha_1+\alpha_2}}{\Gamma(\alpha_1+\alpha_2+1)},\\
	p^{\alpha_2}_{3}(2,t)&=-\lambda I_t^{\alpha_2} \left(p^{\alpha_2}_{2}(2,t)-p^{\alpha_1}_{2}(1,t)\right)\\
	&=(-\lambda)^3\left(\frac{ t^{\alpha_1+2\alpha_2}}{\Gamma(\alpha_1+2\alpha_2+1)}+\frac{t^{2\alpha_1+\alpha_2}}{\Gamma(2\alpha_1+\alpha_2+1)}+\frac{t^{\alpha_0+\alpha_1+\alpha_2}}{\Gamma(\alpha_0+\alpha_1+\alpha_2+1)}\right).
	\end{align*}
	Let
	\begin{equation}
	p^{\alpha_2}_{k-1}(2,t)=(-\lambda)^{k-1}\underset{\Theta^{k-1}_{2}}{\sum}\frac{t^{k_0\alpha_0+k_1\alpha_1+k_2\alpha_2}}{\Gamma\left(k_0\alpha_0+k_1\alpha_1+k_2\alpha_2+1\right)}.
	\end{equation}
	Then
	\begin{align*}
	p^{\alpha_2}_{k}(2,t)&=-\lambda I_t^{\alpha_2} \left(p^{\alpha_2}_{k-1}(2,t)-p^{\alpha_1}_{k-1}(1,t)\right)\\
    &=(-\lambda)^k \left(\underset{\Theta^{k-1}_{2}}{\sum}\frac{(-\lambda)^{k-1}t^{k_0\alpha_0+k_1\alpha_1+k_2\alpha_2+\alpha_2}}{\Gamma\left(k_0\alpha_0+k_1\alpha_1+k_2\alpha_2+\alpha_2+1\right)}+\underset{\Theta^{k-1}_{1}}{\sum}\frac{(-\lambda)^{k-1}t^{k_0\alpha_0+k_1\alpha_1+\alpha_2}}{\Gamma\left(k_0\alpha_0+k_1\alpha_1+\alpha_2+1\right)}\right)\\
	&=(-\lambda)^k\underset{\Theta^{k}_{2}}{\sum}\frac{t^{k_0\alpha_0+k_1\alpha_1+k_2\alpha_2}}{\Gamma\left(k_0\alpha_0+k_1\alpha_1+k_2\alpha_2+1\right)},\ \ k\geq 2.
	\end{align*}
	Therefore,
	\begin{equation}
	p^{\alpha_2}(2,t)=\sum_{k=2}^{\infty}(-\lambda)^k\underset{\Theta^{k}_{2}}{\sum}\frac{t^{k_0\alpha_0+k_1\alpha_1+k_2\alpha_2}}{\Gamma\left(k_0\alpha_0+k_1\alpha_1+k_2\alpha_2+1\right)},
	\end{equation}
	and thus the result holds for $n=2$.
	
	For $n=3$, substituting $p^{\alpha_3}(3,t)=\sum_{k=0}^{\infty}p^{\alpha_3}_{k}(3,t)$ in (\ref{yth}) and applying ADM, we get
	\begin{equation*}
	\sum_{k=0}^{\infty}p^{\alpha_3}_{k}(3,t)=p^{\alpha_3}(3,0)-\lambda\sum_{k=0}^{\infty} I_t^{\alpha_3} \left(p^{\alpha_3}_{k}(3,t)-p^{\alpha_2}_{k}(2,t)\right).
	\end{equation*}
	Thus, $p^{\alpha_3}_{0}(3,t)=p^{\alpha_3}(3,0)=0$ and $p^{\alpha_3}_{k}(3,t)=-\lambda I_t^{\alpha_3} \left(p^{\alpha_3}_{k-1}(3,t)-p^{\alpha_2}_{k-1}(2,t)\right)$, $k\geq 1$.\\
	Hence,
	\begin{align*}
	p^{\alpha_3}_{1}(3,t)&=-\lambda I_t^{\alpha_3} \left(p^{\alpha_3}_{0}(3,t)-p^{\alpha_2}_{0}(2,t)\right)=0,\\
	p^{\alpha_3}_{2}(3,t)&=-\lambda I_t^{\alpha_3} \left(p^{\alpha_3}_{1}(3,t)-p^{\alpha_2}_{1}(2,t)\right)=0,\\
	p^{\alpha_3}_{3}(3,t)&=-\lambda I_t^{\alpha_3} \left(p^{\alpha_3}_{2}(3,t)-p^{\alpha_2}_{2}(2,t)\right)\\
	&=-\lambda I_t^{\alpha_3} \left(-\frac{\lambda^2 t^{\alpha_1+\alpha_2}}{\Gamma(\alpha_1+\alpha_2+1)}\right)=-(-\lambda)^3\frac{ t^{\alpha_1+\alpha_2+\alpha_3}}{\Gamma(\alpha_1+\alpha_2+\alpha_3+1)},\\
	p^{\alpha_3}_{4}(3,t)&=-\lambda I_t^{\alpha_3} \left(p^{\alpha_3}_{3}(3,t)-p^{\alpha_2}_{3}(2,t)\right)\\
	&=-(-\lambda)^4\left(\frac{ t^{\alpha_1+\alpha_2+2\alpha_3}}{\Gamma(\alpha_1+\alpha_2+2\alpha_3+1)}+\frac{ t^{\alpha_1+2\alpha_2+\alpha_3}}{\Gamma(\alpha_1+2\alpha_2+\alpha_3+1)}\right.\\
	&\ \ \ \ \ \ \ \ \ \ \ \ \ \ \ \ \ \left.+\frac{t^{2\alpha_1+\alpha_2+\alpha_3}}{\Gamma(2\alpha_1+\alpha_2+\alpha_3+1)}+\frac{t^{\alpha_0+\alpha_1+\alpha_2+\alpha_3}}{\Gamma(\alpha_0+\alpha_1+\alpha_2+\alpha_3+1)}\right).
	\end{align*}
	Let
	\begin{equation}
	p^{\alpha_3}_{k-1}(3,t)=-(-\lambda)^{k-1}\underset{\Theta^{k-1}_{3}}{\sum}\frac{t^{k_0\alpha_0+k_1\alpha_1+k_2\alpha_2+k_3\alpha_3}}{\Gamma\left(k_0\alpha_0+k_1\alpha_1+k_2\alpha_2+k_3\alpha_3+1\right)}.
	\end{equation}
	Then
	\begin{align*}
	p^{\alpha_3}_{k}(3,t)&=-\lambda I_t^{\alpha_3} \left(p^{\alpha_3}_{k-1}(3,t)-p^{\alpha_2}_{k-1}(2,t)\right)\\
    &=-(-\lambda)^k\left(\underset{\Theta^{k-1}_{3}}{\sum}\frac{t^{\sum_{j=0}^3k_j\alpha_j+\alpha_3}}{\Gamma\left(\sum_{j=0}^3k_j\alpha_j+\alpha_3+1\right)}+\underset{\Theta^{k-1}_{2}}{\sum}\frac{t^{\sum_{j=0}^2k_j\alpha_j+\alpha_3}}{\Gamma\left(\sum_{j=0}^2k_j\alpha_j+\alpha_3+1\right)}\right)\\
	&=-(-\lambda)^k\underset{\Theta^{k}_{3}}{\sum}\frac{t^{\sum_{j=0}^3k_j\alpha_j}}{\Gamma\left(\sum_{j=0}^3k_j\alpha_j+1\right)},\ \ k\geq 3.
	\end{align*}
	Therefore,
	\begin{equation}
	p^{\alpha_3}(3,t)=-\sum_{k=3}^{\infty}(-\lambda)^k\underset{\Theta^{k}_{3}}{\sum}\frac{t^{\sum_{j=0}^3k_j\alpha_j}}{\Gamma\left(\sum_{j=0}^3k_j\alpha_j+1\right)},
	\end{equation}
	and thus the result holds for $n=3$.
	
	Let $p^{\alpha_m}(m,t)=\sum_{k=0}^{\infty}p^{\alpha_m}_{k}(m,t)$ in (\ref{yth}) and assume the result holds for $n=m>3$, {\it i.e.}, $p^{\alpha_m}_{k}(m,t)=0$, $k<m$ and
	\begin{equation*}
	p^{\alpha_m}_{k}(m,t)=(-1)^m(-\lambda)^k\underset{\Theta^{k}_{m}}{\sum}\frac{t^{\sum_{j=0}^mk_j\alpha_j}}{\Gamma\left(\sum_{j=0}^mk_j\alpha_j+1\right)},\ \ k\geq m.
	\end{equation*}
	For $n=m+1$, substituting $p^{\alpha_{m+1}}(m+1,t)=\sum_{k=0}^{\infty}p^{\alpha_{m+1}}_{k}(m+1,t)$ in (\ref{yth}) and applying ADM, we get
	\begin{equation*}
	\sum_{k=0}^{\infty}p^{\alpha_{m+1}}_{k}(m+1,t)=p^{\alpha_{m+1}}(m+1,0)-\lambda\sum_{k=0}^{\infty} I_t^{\alpha_{m+1}} \left(p^{\alpha_{m+1}}_{k}(m+1,t)-p^{\alpha_{m}}_{k}(m,t)\right).
	\end{equation*}
	Thus, $p^{\alpha_{m+1}}_{0}(m+1,t)=p^{\alpha_{m+1}}(m+1,0)=0$ and\\ $p^{\alpha_{m+1}}_{k}(m+1,t)=-\lambda I_t^{\alpha_{m+1}} \left(p^{\alpha_{m+1}}_{k-1}(m+1,t)-p^{\alpha_{m}}_{k-1}(m,t)\right)$, $k\geq 1$. Hence,
	\begin{align*}
	p^{\alpha_{m+1}}_{1}(m+1,t)&=-\lambda I_t^{\alpha_{m+1}} \left(p^{\alpha_{m+1}}_{0}(m+1,t)-p^{\alpha_{m}}_{0}(m,t)\right)=0,\\
	p^{\alpha_{m+1}}_{2}(m+1,t)&=-\lambda I_t^{\alpha_{m+1}} \left(p^{\alpha_{m+1}}_{1}(m+1,t)-p^{\alpha_{m}}_{1}(m,t)\right)=0.
	\end{align*}
	Let
	\begin{equation*}
	p^{\alpha_{m+1}}_{k-1}(m+1,t)=0,\ \ k-1<m+1.
	\end{equation*}
	Then
	\begin{equation*}
	p^{\alpha_{m+1}}_{k}(m+1,t)=-\lambda I_t^{\alpha_{m+1}} \left(p^{\alpha_{m+1}}_{k-1}(m+1,t)-p^{\alpha_{m}}_{k-1}(m,t)\right)=0,\ \ k<m+1.
	\end{equation*}
	Now for $k\geq m+1$, we have
	\begin{align*}
	p^{\alpha_{m+1}}_{m+1}(m+1,t)&=-\lambda I_t^{\alpha_{m+1}} \left(p^{\alpha_{m+1}}_{m}(m+1,t)-p^{\alpha_{m}}_{m}(m,t)\right)\\
	&=-\lambda I_t^{\alpha_{m+1}} \left(-\underset{\Theta^{m}_{m}}{\sum}\frac{\lambda^mt^{\sum_{j=0}^mk_j\alpha_j}}{\Gamma\left(\sum_{j=0}^mk_j\alpha_j+1\right)}\right)\\
	&=\lambda^{m+1} \underset{\Theta^{m}_{m}}{\sum}\frac{t^{\sum_{j=0}^mk_j\alpha_j+\alpha_{m+1}}}{\Gamma\left(\sum_{j=0}^mk_j\alpha_j+\alpha_{m+1}+1\right)}\\
	&=(-1)^{m+1}(-\lambda)^{m+1} \underset{\Theta^{m+1}_{m+1}}{\sum}\frac{t^{\sum_{j=0}^{m+1}k_j\alpha_j}}{\Gamma\left(\sum_{j=0}^{m+1}k_j\alpha_j+1\right)},\\
	p^{\alpha_{m+1}}_{m+2}(m+1,t)&=-\lambda I_t^{\alpha_{m+1}} \left(p^{\alpha_{m+1}}_{m+1}(m+1,t)-p^{\alpha_{m}}_{m+1}(m,t)\right)\\
	&=-\lambda^{m+2}\left(\underset{\Theta^{m+1}_{m+1}}{\sum}\frac{ t^{\sum_{j=0}^{m+1}k_j\alpha_j+\alpha_{m+1}}}{\Gamma\left(\sum_{j=0}^{m+1}k_j\alpha_j+\alpha_{m+1}+1\right)}\right.\\
	&\ \ \ \ \ \ \ \ \ \ \ \ \ \ \ \ \ \ \ \ \ \ \ \left.+\underset{\Theta^{m+1}_{m}}{\sum}\frac{t^{\sum_{j=0}^mk_j\alpha_j+\alpha_{m+1}}}{\Gamma\left(\sum_{j=0}^mk_j\alpha_j+\alpha_{m+1}+1\right)}\right)\\
	&=(-1)^{m+1}(-\lambda)^{m+2}\underset{\Theta^{m+2}_{m+1}}{\sum}\frac{t^{\sum_{j=0}^{m+1}k_j\alpha_j}}{\Gamma\left(\sum_{j=0}^{m+1}k_j\alpha_j+1\right)}.
	\end{align*}
	Let
	\begin{equation*}
	p^{\alpha_{m+1}}_{k-1}(m+1,t)=(-1)^{m+1}(-\lambda)^{k-1}\underset{\Theta^{k-1}_{m+1}}{\sum}\frac{t^{\sum_{j=0}^{m+1}k_j\alpha_j}}{\Gamma\left(\sum_{j=0}^{m+1}k_j\alpha_j+1\right)},\ \ k-1\geq m+1.
	\end{equation*}
	Then
	\begin{align*}
	p^{\alpha_{m+1}}_{k}(m+1,t)&=-\lambda I_t^{\alpha_{m+1}} \left(p^{\alpha_{m+1}}_{k-1}(m+1,t)-p^{\alpha_{m}}_{k-1}(m,t)\right)\\
    &=(-1)^{m+1}(-\lambda)^k \left(\underset{\Theta^{k-1}_{m+1}}{\sum}\frac{t^{\sum_{j=0}^{m+1}k_j\alpha_j+\alpha_{m+1}}}{\Gamma\left(\sum_{j=0}^{m+1}k_j\alpha_j+\alpha_{m+1}+1\right)}\right.\\
    &\ \ \ \ \ \ \ \ \ \ \ \ \ \ \ \ \ \ \ \ \ \ \ \ \ \ \ \ \ \ \ \ \ \left.+\underset{\Theta^{k-1}_{m}}{\sum}\frac{t^{\sum_{j=0}^mk_j\alpha_j+\alpha_{m+1}}}{\Gamma\left(\sum_{j=0}^mk_j\alpha_j+\alpha_{m+1}+1\right)}\right)\\
	&=(-1)^{m+1}(-\lambda)^k\underset{\Theta^{k}_{m+1}}{\sum}\frac{t^{\sum_{j=0}^{m+1}k_j\alpha_j}}{\Gamma\left(\sum_{j=0}^{m+1}k_j\alpha_j+1\right)},\ \ k\geq m+1.
	\end{align*}
	Therefore,
	\begin{align*}
	p^{\alpha_{m+1}}(m+1,t)&=(-1)^{m+1}\sum_{k=m+1}^{\infty}(-\lambda)^k\underset{\Theta^{k}_{m+1}}{\sum}\frac{t^{\sum_{j=0}^{m+1}k_j\alpha_j}}{\Gamma\left(\sum_{j=0}^{m+1}k_j\alpha_j+1\right)},
	\end{align*}
	and thus the result holds for $n=m+1$. This completes the proof.
\end{proof}
\begin{remark}
Note that (\ref{2.4kky}) can also be expressed in the following form
\begin{equation*}\label{tea}
p^{\alpha_n}(n,t)=\lambda^nt^{\sum_{j=1}^n\alpha_j}\sum_{k=0}^{\infty‎}(-\lambda)^k\underset{k_j\in\mathbb{N}_0}{\underset{\sum_{j=0}^nk_j=k}{\sum}}\frac{t^{\sum_{j=0}^nk_j\alpha_j}}{\Gamma(1+k_0\alpha_0+\sum_{j=1}^n(k_j+1)\alpha_j)},\ \ n\geq0.
\end{equation*}
\end{remark}
The Laplace transform of the state probabilities of SDTFPP-I can also be obtained from the above result as follows:
   \begin{align*}
	\tilde{p}^{\alpha_n}(n,s)&=\int_0^{\infty}p^{\alpha_n}(n,t)e^{-st}\,\mathrm{d}t\\
	&=(-1)^n\sum_{k=n}^{\infty‎}\underset{\Theta^k_n}{\sum}\frac{(-\lambda)^k}{s^{k_0\alpha_0+k_1\alpha_1+\cdots+k_n\alpha_n+1}},\ \ \ \ \ \ \ \mathrm{(using}\ \mathrm{(}\ref{2.4kky}\mathrm{))}\\
	&=\frac{\lambda^n}{s^{\alpha_1+\alpha_2+\cdots+\alpha_n+1}}\sum_{k=n}^{\infty‎}\underset{k_0\in\mathbb{N}_0,\ k_j\in\mathbb{N}_0\backslash\{0\},\ 1\leq j\leq n}{\underset{k_0+k_1+\cdots+k_n=k}{\sum}}\frac{(-\lambda)^{k_0+(k_1-1)+\cdots+(k_n-1)}}{s^{k_0\alpha_0+(k_1-1)\alpha_1+\cdots+(k_n-1)\alpha_n}}\\
	&=\frac{\lambda^ns^{\alpha_0-1}}{s^{\alpha_0+\alpha_1+\cdots+\alpha_n}}\sum_{k=0}^{\infty‎}\underset{l_j\in\mathbb{N}_0,\ 0\leq j\leq n}{\underset{l_0+l_1+\cdots+l_n=k}{\sum}}\frac{(-\lambda)^{l_0+l_1+\cdots+l_n}}{s^{l_0\alpha_0+l_1\alpha_1+\cdots+l_n\alpha_n}},\ \ (l_0=k_0,\ l_j=k_j-1,\ j\neq0)\\
	&=\frac{\lambda^ns^{\alpha_0-1}}{s^{\alpha_0+\alpha_1+\cdots+\alpha_n}}\sum_{l_0=0}^{\infty‎}\sum_{l_1=0}^{l_0}\sum_{l_2=0}^{l_1}\cdots\sum_{l_n=0}^{l_{n-1}}\left(\frac{-\lambda}{s^{\alpha_0}}\right)^{l_n}\left(\frac{-\lambda}{s^{\alpha_1}}\right)^{l_{n-1}-l_n}\cdots\left(\frac{-\lambda}{s^{\alpha_n}}\right)^{l_0-l_1}\\
	&=\frac{\lambda^ns^{\alpha_0-1}}{s^{\alpha_0+\alpha_1+\cdots+\alpha_n}}\prod_{k=0}^{n}\sum_{l_k=0}^{\infty‎}\left(\frac{-\lambda}{s^{\alpha_k}}\right)^{l_k}\\
	&=\frac{\lambda^ns^{\alpha_0-1}}{s^{\alpha_0+\alpha_1+\cdots+\alpha_n}}\prod_{k=0}^{n}\left(1+\frac{\lambda}{s^{\alpha_k}}\right)^{-1}\\
	&=\frac{\lambda^ns^{\alpha_0-1}}{\prod_{k=0}^n(s^{\alpha_k}+\lambda)},
	\end{align*}
	which coincides with Theorem 2.1 of Garra {\it et al.} (2015).
	 
	An application of Theorem \ref{t1} is the following result.
	\begin{corollary}\label{cdb}
		Let $X_0,X_1,\ldots,X_n$ be $n+1$ independent random variables such that $X_j$ follows the Mittag-Leffler distribution (see Pillai (1990)) with distribution function $F_{X_j}(t)=1-E_{\alpha_j}(-t^{\alpha_j})$, where $0<\alpha_j\leq1$, $0\leq j\leq n$ and $\alpha_0=1$. Then, the density function of the convolution $T=X_0+X_1+\cdots+X_n$ is given by
		\begin{equation*}
		f_T(t)=\sum_{k=n}^{\infty‎}(-1)^{n+k}\underset{\Theta^k_n}{\sum}\frac{t^{k_0+\sum_{j=1}^nk_j\alpha_j}}{\Gamma(1+k_0+\sum_{j=1}^nk_j\alpha_j)},\ \ t\geq0,
		\end{equation*}
		where 
		$
		\Theta^k_n=\{(k_0,k_1,\ldots,k_n):\ \sum_{j=0}^nk_j=k,\ k_0\in\mathbb{N}_0,\ k_j\in\mathbb{N}_0\backslash\{0\},\ 1\leq j\leq n\}.
		$
	\end{corollary}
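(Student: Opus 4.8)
The plan is to recognize the stated density as the $\lambda=1$, $\alpha_0=1$ specialization of the state probability $p^{\alpha_n}(n,t)$ from Theorem \ref{t1}, and to confirm this identification through Laplace transforms. First I would record the Laplace transform of a single Mittag-Leffler density. Since $\overline{F}_{X_j}(t)=E_{\alpha_j}(-t^{\alpha_j})$ is the survival function, the density is $f_{X_j}(t)=-\frac{\mathrm{d}}{\mathrm{d}t}E_{\alpha_j}(-t^{\alpha_j})$. Using the standard transform $\int_0^\infty E_{\alpha_j}(-t^{\alpha_j})e^{-st}\,\mathrm{d}t = s^{\alpha_j-1}/(s^{\alpha_j}+1)$ together with $E_{\alpha_j}(0)=1$ (see (\ref{fastt})), I obtain
\begin{equation*}
\tilde{f}_{X_j}(s)=1-s\cdot\frac{s^{\alpha_j-1}}{s^{\alpha_j}+1}=\frac{1}{s^{\alpha_j}+1},\qquad 0\leq j\leq n.
\end{equation*}

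Next, since $X_0,X_1,\ldots,X_n$ are independent, the Laplace transform of the density of the convolution $T=X_0+\cdots+X_n$ is the product of the individual transforms,
\begin{equation*}
\tilde{f}_T(s)=\prod_{j=0}^n\frac{1}{s^{\alpha_j}+1}=\frac{1}{\prod_{j=0}^n(s^{\alpha_j}+1)}.
\end{equation*}
Setting $\lambda=1$ and $\alpha_0=1$ in the Laplace transform (\ref{lt1}) of SDTFPP-I gives $s^{\alpha_0-1}=1$ and $\lambda^n=1$, so that $\tilde{p}^{\alpha_n}(n,s)=1/\prod_{k=0}^n(s^{\alpha_k}+1)$, which coincides exactly with $\tilde{f}_T(s)$.

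Finally, by the uniqueness of the Laplace transform, $f_T(t)=p^{\alpha_n}(n,t)$ evaluated at $\lambda=1$, $\alpha_0=1$, and I would read off the closed form from Theorem \ref{t1}. Substituting $\lambda=1$ into (\ref{2.4kky}) turns $(-\lambda)^k$ into $(-1)^k$ and combines with the prefactor to give $(-1)^{n}(-1)^k=(-1)^{n+k}$; substituting $\alpha_0=1$ replaces the exponent $\sum_{j=0}^nk_j\alpha_j$ by $k_0+\sum_{j=1}^nk_j\alpha_j$, producing precisely the claimed expression for $f_T(t)$ with the same index set $\Theta^k_n$. The only point needing care is the legitimacy of term-by-term Laplace inversion and the convergence of the resulting series; but this has already been settled by Theorem \ref{t1} (whose series was independently verified to transform back to (\ref{lt1})), so nothing new is required beyond the product rule for transforms of convolutions and the elementary single-variable computation above. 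Hence I anticipate no genuine obstacle—the work is essentially a transcription of Theorem \ref{t1} into the renewal-theoretic language of Mittag-Leffler convolutions.
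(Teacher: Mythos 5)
Your proposal is correct and follows essentially the same route as the paper: compute the Laplace transform $\tilde{f}_{X_j}(s)=1/(1+s^{\alpha_j})$ of each Mittag-Leffler density, multiply over $j$ by independence, recognize the product as (\ref{lt1}) with $\lambda=1$ and $\alpha_0=1$, and read off the density from (\ref{2.4kky}). The only difference is cosmetic—you derive the single-variable transform from the survival function and spell out the substitution explicitly, whereas the paper states the transform and concludes "in view of (\ref{lt1}) and (\ref{2.4kky})."
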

	\begin{proof}
		The Laplace transform of the density of Mittag-Leffler random variables $X_j$'s is
		\begin{equation*}
		\tilde{f}_{X_j}(s)=\mathbb{E}(e^{-sX_j})=\frac{1}{1+s^{\alpha_j}},\ \ s>0.
		\end{equation*}
		Hence, the Laplace transform of the convolution is given by
		\begin{equation*}
		\tilde{f}_{T}(s)=\mathbb{E}(e^{-sT})=\prod_{j=0}^{n}\mathbb{E}(e^{-sX_j})=\frac{1}{\prod_{j=0}^n(1+s^{\alpha_j})},\ \ s>0.
		\end{equation*}
		The proof follows in view of (\ref{lt1}) and (\ref{2.4kky}). 
	\end{proof}
\begin{corollary}
		Let $X_1$ be the first waiting time of SDTFPP-I. Then the distribution of $X_1$ is given by:
	\begin{equation}
	\mathrm{Pr}\{X_1>t\}=\mathrm{Pr}\{N_1(t,\lambda)=0\}=E_{\alpha_0}(-\lambda t^{\alpha_0}),
	\end{equation}
	where $E_{\alpha_0}(.)$ is the Mittag-Leffler function given by (\ref{fastt}).
\end{corollary}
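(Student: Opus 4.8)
The plan is to split the assertion into its two equalities and handle each separately. The first equality, $\mathrm{Pr}\{X_1>t\}=\mathrm{Pr}\{N_1(t,\lambda)=0\}$, is purely definitional: the first waiting time $X_1$ of SDTFPP-I is the instant at which the counting process $\{N_1(t,\lambda)\}_{t\geq0}$ records its first event, so $\{X_1>t\}$ is exactly the event that no event has occurred up to time $t$, namely $\{N_1(t,\lambda)=0\}$. Passing to probabilities therefore gives $\mathrm{Pr}\{X_1>t\}=\mathrm{Pr}\{N_1(t,\lambda)=0\}=p^{\alpha_0}(0,t)$.

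For the second equality I would specialize Theorem \ref{t1} to the case $n=0$. When $n=0$ the index set $\Theta^k_0$ reduces to the single tuple $(k_0)$ with $k_0=k$, because the constraint $k_j\in\mathbb{N}_0\backslash\{0\}$ imposed for $1\leq j\leq n$ is vacuous; consequently the inner sum over $\Theta^k_0$ in (\ref{2.4kky}) contains the single term $t^{k\alpha_0}/\Gamma(1+k\alpha_0)$. Substituting this and setting $n=0$ collapses the double series to
\begin{equation*}
p^{\alpha_0}(0,t)=\sum_{k=0}^{\infty}(-\lambda)^k\frac{t^{k\alpha_0}}{\Gamma(1+k\alpha_0)}=\sum_{k=0}^{\infty}\frac{(-\lambda t^{\alpha_0})^k}{\Gamma(k\alpha_0+1)},
\end{equation*}
which is precisely the expression already obtained within the proof of Theorem \ref{t1} for the $n=0$ state probability.

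Finally, I would recognize the last series as the Mittag-Leffler function (\ref{fastt}) with parameter $\alpha_0$ evaluated at $x=-\lambda t^{\alpha_0}$, giving $p^{\alpha_0}(0,t)=E_{\alpha_0}(-\lambda t^{\alpha_0})$ and closing the chain of equalities. There is essentially no obstacle here, since the result is an immediate corollary of the $n=0$ case of Theorem \ref{t1}; the only point deserving a word is the collapse of $\Theta^k_0$ to a singleton, which merely records that the positivity constraint on the coordinates $k_1,\ldots,k_n$ is empty when $n=0$.
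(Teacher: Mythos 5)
Your proposal is correct and follows exactly the route the paper intends: the paper states this corollary without proof as an immediate consequence of Theorem \ref{t1}, whose $n=0$ case already exhibits $p^{\alpha_0}(0,t)=\sum_{k=0}^{\infty}(-\lambda t^{\alpha_0})^k/\Gamma(k\alpha_0+1)=E_{\alpha_0}(-\lambda t^{\alpha_0})$, combined with the definitional identity $\mathrm{Pr}\{X_1>t\}=\mathrm{Pr}\{N_1(t,\lambda)=0\}$. Your explicit remark that $\Theta^k_0$ collapses to the singleton $(k_0)=(k)$ is a correct and worthwhile clarification of that specialization.
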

The distribution of TFPP can be obtained as a special case of SDTFPP-I.
\begin{corollary}
	Let $\alpha_n=\alpha$ for all $n\geq0$. Then
	\begin{equation}
	p^\alpha(n,t)=\frac{(\lambda t^\alpha)^n}{n!}\sum_{k=0}^{\infty‎}\frac{(k+n)!}{k!}\frac{(-\lambda t^\alpha)^k}{\Gamma\left((k+n)\alpha+1\right)},\ \ 0<\alpha\leq1,\ \lambda>0,\ n\geq0,
	\end{equation}
	 which is the distribution of TFPP.
\end{corollary}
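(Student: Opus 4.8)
The plan is to specialize the master formula (\ref{2.4kky}) of Theorem \ref{t1} to the case $\alpha_j=\alpha$ for all $j\geq0$, and then collapse the multi-index sum over $\Theta^k_n$ into a single binomial coefficient. First I would observe that when every $\alpha_j$ equals $\alpha$, the exponent appearing in (\ref{2.4kky}) simplifies dramatically: for any $(k_0,\ldots,k_n)\in\Theta^k_n$ one has $\sum_{j=0}^nk_j\alpha_j=\alpha\sum_{j=0}^nk_j=k\alpha$, since the constraint defining $\Theta^k_n$ fixes $\sum_{j=0}^nk_j=k$. Hence the summand $t^{\sum_jk_j\alpha_j}/\Gamma(1+\sum_jk_j\alpha_j)$ no longer depends on the particular index, and the inner sum over $\Theta^k_n$ reduces to the single factor $|\Theta^k_n|\cdot t^{k\alpha}/\Gamma(1+k\alpha)$.

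The crucial step is then to evaluate the cardinality $|\Theta^k_n|$. I would count it via the substitution $l_0=k_0$ and $l_j=k_j-1$ for $1\leq j\leq n$, which maps $\Theta^k_n$ bijectively onto the set of nonnegative integer solutions of $\sum_{j=0}^nl_j=k-n$. A standard stars-and-bars argument then yields $|\Theta^k_n|=\binom{k}{n}$. Substituting back gives
\[
p^\alpha(n,t)=(-1)^n\sum_{k=n}^{\infty}(-\lambda)^k\binom{k}{n}\frac{t^{k\alpha}}{\Gamma(1+k\alpha)}.
\]

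Finally I would reindex by setting $k=m+n$ with $m\geq0$, use the identity $(-1)^n(-\lambda)^{m+n}=(-1)^m\lambda^{m+n}$, and factor out $(\lambda t^\alpha)^n/n!$; writing $\binom{m+n}{n}=(m+n)!/(n!\,m!)$ and renaming $m$ back to $k$ then delivers the stated expression exactly. The only nonroutine ingredient is the combinatorial evaluation $|\Theta^k_n|=\binom{k}{n}$; the remaining manipulations are elementary algebra and reindexing. The resulting formula is precisely the law of the TFPP of Beghin and Orsingher (2009), so this corollary serves as a consistency check confirming that Theorem \ref{t1} specializes correctly in the constant-order case.
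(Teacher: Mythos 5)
Your proposal is correct and follows essentially the same route as the paper: specialize (\ref{2.4kky}) to constant order $\alpha$, evaluate $|\Theta^k_n|=\binom{k}{n}$, and reindex $k=m+n$ to reach the TFPP form. The only cosmetic difference is that you count $\Theta^k_n$ via the shift $l_j=k_j-1$ plus stars and bars, whereas the paper splits on $k_0=0$ versus $k_0\geq 1$ and applies Pascal's identity $\binom{k-1}{n}+\binom{k-1}{n-1}=\binom{k}{n}$; both yield the same binomial coefficient.
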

\begin{proof}
	On substituting $\alpha_n=\alpha$ for all $n\geq0$ in (\ref{2.4kky}), we obtain
	\begin{equation*}
	p^{\alpha}(n,t)=(-1)^n\sum_{k=n}^{\infty‎}(-\lambda)^k|\Theta^k_n|\frac{t^{k\alpha}}{\Gamma\left(k\alpha+1\right)},
	\end{equation*}
	where $|\Theta^k_n|$ denotes the cardinality of the set $\Theta^k_n$. 
	
	The number of positive integral solutions of $k_0+k_1+\cdots+k_n=k$ is $\binom{k-1}{n}$ (see Proposition 6.1., Ross (2010)), and similarly the number of positive integral solutions of $k_1+k_2+\cdots+k_n=k$ is $\binom{k-1}{n-1}$. Hence, the cardinality of the set $\Theta^k_n$ is $|\Theta^k_n|=\binom{k-1}{n}+\binom{k-1}{n-1}=\binom{k}{n}$. Therefore,
	\begin{equation*}
	p^{\alpha}(n,t)=(-1)^n\sum_{k=n}^{\infty‎}\frac{k!}{n!(k-n)!}\frac{(-\lambda t^{\alpha})^k}{\Gamma\left(k\alpha+1\right)}=\frac{(\lambda t^{\alpha})^n}{n!}\sum_{k=0}^{\infty‎}\frac{(k+n)!}{k!}\frac{(-\lambda t^{\alpha})^k}{\Gamma\left((k+n)\alpha+1\right)},\ \ n\geq0,
	\end{equation*}
	and thus the proof follows.
\end{proof}

\section{State dependent time fractional Poisson process-II}
In this section, we obtain the state probabilities of SDTFPP-II, a point process constructed by Garra {\it et al.} (2015) by considering independent and nonidentically distributed waiting times.
\begin{theorem}\label{t2}
	Consider the following difference-differential equations governing the state probabilities of SDTFPP-II:
	\begin{equation}\label{wqr2new}
	p^{\beta_n}(n,t)=p^{\beta_n}(n,0)-\lambda(I_t^{\beta_n}p^{\beta_n}(n,t)-I_t^{\beta_{n-1}}p^{\beta_{n-1}}(n-1,t)),\ \ 0<\beta_n\leq 1,\ \lambda>0,\ n\geq 0,
	\end{equation}
	with $p^{\beta_0}(0,0)=1$ and $p^{\beta_n}(n,0)=0$, $n\geq 1$. The solution of (\ref{wqr2new}) is given by
	\begin{equation}\label{wq2.4kky}
	p^{\beta_n}(n,t)=(-1)^n\sum_{k=n}^{\infty‎}(-\lambda)^k\underset{\Omega^k_n}{\sum}\frac{t^{\sum_{j=0}^nk_j\beta_j}}{\Gamma(1+\sum_{j=0}^nk_j\beta_j)},\ \ n\geq0,
	\end{equation}
	where 
	$
	\Omega^k_n=\{(k_0,k_1,\ldots,k_n):\ \sum_{j=0}^nk_j=k,\ k_n\in\mathbb{N}_0,\ k_j\in\mathbb{N}_0\backslash\{0\},\ 0\leq j\leq n-1\}.
	$
\end{theorem}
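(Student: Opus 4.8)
The plan is to mirror the proof of Theorem~\ref{t1}, solving (\ref{wqr2new}) by the Adomian decomposition method and inducting on $n$; the only essential change is the order of the fractional integral acting on the coupling term. Since (\ref{wqr2new}) is already in integral form and has no nonlinear part, I would substitute $p^{\beta_n}(n,t)=\sum_{k=0}^{\infty}p^{\beta_n}_k(n,t)$ together with the analogous decomposition of the previous state and match components to obtain the recursion
\begin{equation*}
p^{\beta_n}_0(n,t)=p^{\beta_n}(n,0),\qquad p^{\beta_n}_k(n,t)=-\lambda I_t^{\beta_n}p^{\beta_n}_{k-1}(n,t)+\lambda I_t^{\beta_{n-1}}p^{\beta_{n-1}}_{k-1}(n-1,t),\quad k\geq 1.
\end{equation*}
The decisive structural difference from SDTFPP-I is that here the coupling term is acted on by $I_t^{\beta_{n-1}}$, the integral of the \emph{previous} order, rather than by $I_t^{\beta_n}$. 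Hence each passage from level $n-1$ to level $n$ through the coupling contributes an extra $\beta_{n-1}$ (not $\beta_n$) to the exponent of $t$, and this shift is exactly what interchanges the roles of the first and last indices and turns $\Theta^k_n$ into $\Omega^k_n$.

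For the base case $n=0$ the coupling term is absent (since $p^{\beta_{-1}}(-1,t)=0$), so the recursion collapses to $p^{\beta_0}_k(0,t)=-\lambda I_t^{\beta_0}p^{\beta_0}_{k-1}(0,t)$, and the Riemann--Liouville power rule $I^{\alpha}_t t^{\rho}=\Gamma(\rho+1)t^{\rho+\alpha}/\Gamma(\rho+\alpha+1)$ gives $p^{\beta_0}_k(0,t)=(-\lambda t^{\beta_0})^k/\Gamma(k\beta_0+1)$, which is (\ref{wq2.4kky}) for $n=0$ because $\Omega^k_0=\{(k)\}$. The inductive hypothesis I would carry on $n$ is that $p^{\beta_m}_k(m,t)=0$ for $k<m$ and $p^{\beta_m}_k(m,t)=(-1)^m(-\lambda)^k\sum_{\Omega^k_m}t^{\sum_j k_j\beta_j}/\Gamma(1+\sum_j k_j\beta_j)$ for $k\geq m$. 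The vanishing of $p^{\beta_{m+1}}_k(m+1,t)$ for $k<m+1$ is then immediate, since both driving terms in the recursion vanish in that range.

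The heart of the step is the case $k\geq m+1$, itself handled by an inner induction on $k$. Using the formula for $p^{\beta_{m+1}}_{k-1}(m+1,t)$ (indexed by $\Omega^{k-1}_{m+1}$) and the level-$m$ hypothesis for $p^{\beta_m}_{k-1}(m,t)$, the self-term $-\lambda I_t^{\beta_{m+1}}p^{\beta_{m+1}}_{k-1}(m+1,t)$ adds one unit to the slot $k_{m+1}$ on each tuple of $\Omega^{k-1}_{m+1}$, while the coupling term $\lambda I_t^{\beta_m}p^{\beta_m}_{k-1}(m,t)$ adds one unit to the slot $k_m$ on each tuple of $\Omega^{k-1}_m$ and appends $k_{m+1}=0$. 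A short sign check shows both contributions share the common factor $(-1)^{m+1}(-\lambda)^k$. The remaining combinatorial identity is the partition of $\Omega^k_{m+1}$ according to whether $k_{m+1}\geq 1$ or $k_{m+1}=0$: decrementing $k_{m+1}$ puts the first family in bijection with $\Omega^{k-1}_{m+1}$, and decrementing $k_m$ (and dropping the trailing zero) puts the second family in bijection with $\Omega^{k-1}_m$. Verifying this---in particular that the constraint $k_m\geq 1$ demanded by $\Omega^k_{m+1}$ is precisely what is produced by incrementing the \emph{unconstrained} last slot of $\Omega^{k-1}_m$---is the one genuine bookkeeping point, and it is the mirror image of the $k_0$-bijection used for $\Theta^k_n$ in Theorem~\ref{t1}.

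Summing $p^{\beta_{m+1}}(m+1,t)=\sum_{k\geq m+1}p^{\beta_{m+1}}_k(m+1,t)$ then delivers (\ref{wq2.4kky}) for $n=m+1$ and closes the outer induction. As a consistency check I would invert (\ref{wq2.4kky}) term by term and confirm that it reproduces the Laplace transform (\ref{lt2}), with the factor $s^{\beta_n-1}$ arising from the free last index just as $s^{\alpha_0-1}$ arose from the free first index after Theorem~\ref{t1}. I expect no analytic obstacle beyond the combinatorial matching, since the absence of a nonlinear term makes the Adomian recursion collapse to pure iterated integration.
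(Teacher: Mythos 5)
Your proposal is correct and follows essentially the same route as the paper: apply the Adomian decomposition to (\ref{wqr2new}), obtain the component recursion $p^{\beta_n}_k(n,t)=-\lambda\bigl(I_t^{\beta_n}p^{\beta_n}_{k-1}(n,t)-I_t^{\beta_{n-1}}p^{\beta_{n-1}}_{k-1}(n-1,t)\bigr)$, and run an outer induction on $n$ with an inner induction on $k$, the coupling term $I_t^{\beta_{n-1}}$ being exactly what replaces $\Theta^k_n$ by $\Omega^k_n$. Your explicit bijective partition of $\Omega^k_{m+1}$ by $k_{m+1}\geq 1$ versus $k_{m+1}=0$, and the sign bookkeeping, are precisely the combinatorial content of the paper's inductive step (the paper merely warms up with the cases $n=1,2,3$ before stating it).
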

\begin{proof}
	Note that $p^{\beta_{-1}}(-1,t)=0$ for $t\geq0$.
	
	 The case $n=0$ for SDTFPP-II corresponds to the case $n=0$ of SDTFPP-I. Hence, on substituting $p^{\beta_0}(0,t)=\sum_{k=0}^{\infty}p^{\beta_0}_{k}(0,t)$ in (\ref{wqr2new}), we obtain $p^{\beta_0}_{0}(0,t)=p^{\beta_0}(0,0)=1$ and
	\begin{equation}
	p^{\beta_0}_{k}(0,t)=\frac{(-\lambda t^{\beta_0})^{k}}{\Gamma(k\beta_0+1)},\ \ k\geq 0.
	\end{equation}
	
	For $n=1$, substituting $p^{\beta_1}(1,t)=\sum_{k=0}^{\infty}p^{\beta_1}_{k}(1,t)$ in (\ref{wqr2new}) and applying ADM, we get
	\begin{equation*}
	\sum_{k=0}^{\infty}p^{\beta_1}_{k}(1,t)=p^{\beta_1}(1,0)-\lambda\sum_{k=0}^{\infty} \left(I_t^{\beta_1}p^{\beta_1}_{k}(1,t)-I_t^{\beta_0}p^{\beta_0}_{k}(0,t)\right).
	\end{equation*}
	Thus, $p^{\beta_1}_{0}(1,t)=p^{\beta_1}(1,0)=0$ and $p^{\beta_1}_{k}(1,t)=-\lambda \left(I_t^{\beta_1}p^{\beta_1}_{k-1}(1,t)-I_t^{\beta_0}p^{\beta_0}_{k-1}(0,t)\right)$, $k\geq 1$.\\
	Hence,
	\begin{align*}
	p^{\beta_1}_{1}(1,t)&=-\lambda\left(I_t^{\beta_1}p^{\beta_1}_{0}(1,t)-I_t^{\beta_0}p^{\beta_0}_{0}(0,t)\right)=\lambda I_t^{\beta_0} t^0=-(-\lambda)\frac{ t^{\beta_0}}{\Gamma({\beta_0}+1)},\\
	p^{\beta_1}_{2}(1,t)&=-\lambda\left(I_t^{\beta_1}p^{\beta_1}_{1}(1,t)-I_t^{\beta_0}p^{\beta_0}_{1}(0,t)\right)\\
	&=-\lambda\left(\frac{\lambda I_t^{\beta_1}t^{\beta_0}}{\Gamma(\beta_0+1)}+\frac{\lambda I_t^{\beta_0}t^{\beta_0}}{\Gamma(\beta_0+1)}\right)=-(-\lambda)^2 \left(\frac{ t^{\beta_0+\beta_1}}{\Gamma(\beta_0+\beta_1+1)}+\frac{ t^{2\beta_0}}{\Gamma(2\beta_0+1)}\right),\\
	p^{\beta_1}_{3}(1,t)&=-\lambda\left(I_t^{\beta_1}p^{\beta_1}_{2}(1,t)-I_t^{\beta_0}p^{\beta_0}_{2}(0,t)\right)\\
  	&=-(-\lambda)^3 \left(\frac{ t^{\beta_0+2\beta_1}}{\Gamma(\beta_0+2\beta_1+1)}+\frac{ t^{2\beta_0+\beta_1}}{\Gamma(2\beta_0+\beta_1+1)}+\frac{ t^{3\beta_0}}{\Gamma(3\beta_0+1)}\right).
	\end{align*}
	Let
	\begin{equation}
	p^{\beta_1}_{k-1}(1,t)=-(-\lambda)^{k-1}\underset{\Omega^{k-1}_{1}}{\sum}\frac{t^{k_0\beta_0+k_1\beta_1}}{\Gamma\left(k_0\beta_0+k_1\beta_1+1\right)}.
	\end{equation}
	Then
	\begin{align*}
	p^{\beta_1}_{k}(1,t)&=-\lambda\left(I_t^{\beta_1}p^{\beta_1}_{k-1}(1,t)-I_t^{\beta_0}p^{\beta_0}_{k-1}(0,t)\right)\\
	&=-\lambda\left(-\underset{\Omega^{k-1}_{1}}{\sum}\frac{(-\lambda)^{k-1}t^{k_0\beta_0+k_1\beta_1+\beta_1}}{\Gamma\left(k_0\beta_0+k_1\beta_1+\beta_1+1\right)}-\frac{(-\lambda )^{k-1}t^{k\beta_0}}{\Gamma(k\beta_0+1)}\right)\\&=-(-\lambda)^k\underset{\Omega^{k}_{1}}{\sum}\frac{t^{k_0\beta_0+k_1\beta_1}}{\Gamma\left(k_0\beta_0+k_1\beta_1+1\right)},\ \ k\geq 1.
	\end{align*}
	Therefore,
	\begin{equation}
	p^{\beta_1}(1,t)=-\sum_{k=1}^{\infty}(-\lambda)^k\underset{\Omega^{k}_{1}}{\sum}\frac{t^{k_0\beta_0+k_1\beta_1}}{\Gamma\left(k_0\beta_0+k_1\beta_1+1\right)},
	\end{equation}
	and thus the result holds for $n=1$.
	
	For $n=2$, substituting $p^{\beta_2}(2,t)=\sum_{k=0}^{\infty}p^{\beta_2}_{k}(2,t)$ in (\ref{wqr2new}) and applying ADM, we get
	\begin{equation*}
	\sum_{k=0}^{\infty}p^{\beta_2}_{k}(2,t)=p^{\beta_2}(2,0)-\lambda\sum_{k=0}^{\infty}\left(I_t^{\beta_2}p^{\beta_2}_{k}(2,t)-I_t^{\beta_1}p^{\beta_1}_{k}(1,t)\right).
	\end{equation*}
	Thus, $p^{\beta_2}_{0}(2,t)=p^{\beta_2}(2,0)=0$ and $p^{\beta_2}_{k}(2,t)=-\lambda\left(I_t^{\beta_2}p^{\beta_2}_{k-1}(2,t)-I_t^{\beta_1}p^{\beta_1}_{k-1}(1,t)\right)$, $k\geq 1$.\\
	Hence,
	\begin{align*}
	p^{\beta_2}_{1}(2,t)&=-\lambda\left(I_t^{\beta_2}p^{\beta_2}_{0}(2,t)-I_t^{\beta_1}p^{\beta_1}_{0}(1,t)\right)=0,\\
	p^{\beta_2}_{2}(2,t)&=-\lambda\left(I_t^{\beta_2}p^{\beta_2}_{1}(2,t)-I_t^{\beta_1}p^{\beta_1}_{1}(1,t)\right)=(-\lambda)^2\frac{I_t^{\beta_1}t^{\beta_0}}{\Gamma({\beta_0}+1)}=(-\lambda)^2\frac{ t^{\beta_0+\beta_1}}{\Gamma(\beta_0+\beta_1+1)},\\
	p^{\beta_2}_{3}(2,t)&=-\lambda\left(I_t^{\beta_2}p^{\beta_2}_{2}(2,t)-I_t^{\beta_1}p^{\beta_1}_{2}(1,t)\right)\\
	&=(-\lambda)^3\left(\frac{t^{\beta_0+\beta_1+\beta_2}}{\Gamma(\beta_0+\beta_1+\beta_2+1)}+\frac{t^{\beta_0+2\beta_1}}{\Gamma(\beta_0+2\beta_1+1)}+\frac{t^{2\beta_0+\beta_1}}{\Gamma(2\beta_0+\beta_1+1)}\right).
	\end{align*}
	Let
	\begin{equation}
	p^{\beta_2}_{k-1}(2,t)=(-\lambda)^{k-1}\underset{\Omega^{k-1}_{2}}{\sum}\frac{t^{k_0\beta_0+k_1\beta_1+k_2\beta_2}}{\Gamma\left(k_0\beta_0+k_1\beta_1+k_2\beta_2+1\right)}.
	\end{equation}
	Then
	\begin{align*}
	p^{\beta_2}_{k}(2,t)&=-\lambda\left(I_t^{\beta_2}p^{\beta_2}_{k-1}(2,t)-I_t^{\beta_1}p^{\beta_1}_{k-1}(1,t)\right)\\
	&=-\lambda\left(\underset{\Omega^{k-1}_{2}}{\sum}\frac{(-\lambda)^{k-1}t^{k_0\beta_0+k_1\beta_1+k_2\beta_2+\beta_2}}{\Gamma\left(k_0\beta_0+k_1\beta_1+k_2\beta_2+\beta_2+1\right)}+\underset{\Omega^{k-1}_{1}}{\sum}\frac{(-\lambda)^{k-1}t^{k_0\beta_0+k_1\beta_1+\beta_1}}{\Gamma\left(k_0\beta_0+k_1\beta_1+\beta_1+1\right)}\right)\\
	&=(-\lambda)^k\underset{\Omega^{k}_{2}}{\sum}\frac{t^{k_0\beta_0+k_1\beta_1+k_2\beta_2}}{\Gamma\left(k_0\beta_0+k_1\beta_1+k_2\beta_2+1\right)},\ \ k\geq 2.
	\end{align*}
	Therefore,
	\begin{equation}
	p^{\beta_2}(2,t)=\sum_{k=2}^{\infty}(-\lambda)^k\underset{\Omega^{k}_{2}}{\sum}\frac{t^{k_0\beta_0+k_1\beta_1+k_2\beta_2}}{\Gamma\left(k_0\beta_0+k_1\beta_1+k_2\beta_2+1\right)},
	\end{equation}
	and thus the result holds for $n=2$.
	
	For $n=3$, substituting $p^{\beta_3}(3,t)=\sum_{k=0}^{\infty}p^{\beta_3}_{k}(3,t)$ in (\ref{wqr2new}) and applying ADM, we get
	\begin{equation*}
	\sum_{k=0}^{\infty}p^{\beta_3}_{k}(3,t)=p^{\beta_3}(3,0)-\lambda\sum_{k=0}^{\infty}\left(I_t^{\beta_3}p^{\beta_3}_{k}(3,t)-I_t^{\beta_2}p^{\beta_2}_{k}(2,t)\right).
	\end{equation*}
	Thus, $p^{\beta_3}_{0}(3,t)=p^{\beta_3}(3,0)=0$ and $p^{\beta_3}_{k}(3,t)=-\lambda\left(I_t^{\beta_3}p^{\beta_3}_{k-1}(3,t)-I_t^{\beta_2}p^{\beta_2}_{k-1}(2,t)\right)$, $k\geq 1$.\\
	Hence,
	\begin{align*}
	p^{\beta_3}_{1}(3,t)&=-\lambda\left(I_t^{\beta_3}p^{\beta_3}_{0}(3,t)-I_t^{\beta_2}p^{\beta_2}_{0}(2,t)\right)=0,\\
	p^{\beta_3}_{2}(3,t)&=-\lambda\left(I_t^{\beta_3}p^{\beta_3}_{1}(3,t)-I_t^{\beta_2}p^{\beta_2}_{1}(2,t)\right)=0,\\
	p^{\beta_3}_{3}(3,t)&=-\lambda\left(I_t^{\beta_3}p^{\beta_3}_{2}(3,t)-I_t^{\beta_2}p^{\beta_2}_{2}(2,t)\right)\\
	&=-(-\lambda)^3\frac{I_t^{\beta_2}t^{\beta_0+\beta_1}}{\Gamma(\beta_0+\beta_1+1)}=-(-\lambda)^3\frac{ t^{\beta_0+\beta_1+\beta_2}}{\Gamma(\beta_0+\beta_1+\beta_2+1)},\\
	p^{\beta_3}_{4}(3,t)&=-\lambda\left(I_t^{\beta_3}p^{\beta_3}_{3}(3,t)-I_t^{\beta_2}p^{\beta_2}_{3}(2,t)\right)\\
	&=-(-\lambda)^4\left(\frac{t^{\beta_0+\beta_1+\beta_2+\beta_3}}{\Gamma(\beta_0+\beta_1+\beta_2+\beta_3+1)}+\frac{t^{\beta_0+\beta_1+2\beta_2}}{\Gamma(\beta_0+\beta_1+2\beta_2+1)}\right.\\
	&\ \ \ \ \ \ \ \ \ \ \ \ \ \ \ \ \ \ \ \ \ \ \ \ \ \ \left.+\frac{ t^{\beta_0+2\beta_1+\beta_2}}{\Gamma(\beta_0+2\beta_1+\beta_2+1)}+\frac{t^{2\beta_0+\beta_1+\beta_2}}{\Gamma(2\beta_0+\beta_1+\beta_2+1)}\right).
	\end{align*}
	Let
	\begin{equation}
	p^{\beta_3}_{k-1}(3,t)=-(-\lambda)^{k-1}\underset{\Omega^{k-1}_{3}}{\sum}\frac{t^{\sum_{j=0}^3k_j\beta_j}}{\Gamma\left(\sum_{j=0}^3k_j\beta_j+1\right)}.
	\end{equation}
	Then
	\begin{align*}
	p^{\beta_3}_{k}(3,t)&=-\lambda\left(I_t^{\beta_3}p^{\beta_3}_{k-1}(3,t)-I_t^{\beta_2}p^{\beta_2}_{k-1}(2,t)\right)\\
	&=-\lambda\left(-\underset{\Omega^{k-1}_{3}}{\sum}\frac{(-\lambda)^{k-1}t^{\sum_{j=0}^3k_j\beta_j+\beta_3}}{\Gamma\left(\sum_{j=0}^3k_j\beta_j+\beta_3+1\right)}-\underset{\Omega^{k-1}_{2}}{\sum}\frac{(-\lambda)^{k-1}t^{\sum_{j=0}^2k_j\beta_j+\beta_2}}{\Gamma\left(\sum_{j=0}^2k_j\beta_j+\beta_2+1\right)}\right)\\
	&=-(-\lambda)^k\underset{\Omega^{k}_{3}}{\sum}\frac{t^{\sum_{j=0}^3k_j\beta_j}}{\Gamma\left(\sum_{j=0}^3k_j\beta_j+1\right)},\ \ k\geq 3.
	\end{align*}
	Therefore,
	\begin{equation}
	p^{\beta_3}(3,t)=-\sum_{k=3}^{\infty}(-\lambda)^k\underset{\Omega^{k}_{3}}{\sum}\frac{t^{\sum_{j=0}^3k_j\beta_j}}{\Gamma\left(\sum_{j=0}^3k_j\beta_j+1\right)},
	\end{equation}
	and thus the result holds for $n=3$.
	
	Let $p^{\beta_m}(m,t)=\sum_{k=0}^{\infty}p^{\beta_m}_{k}(m,t)$ in (\ref{wqr2new}) and assume the result holds for $n=m>3$, {\it i.e.}, $p^{\beta_m}_{k}(m,t)=0$, $k<m$ and
	\begin{equation*}
	p^{\beta_m}_{k}(m,t)=(-1)^m(-\lambda)^k\underset{\Omega^{k}_{m}}{\sum}\frac{t^{\sum_{j=0}^mk_j\beta_j}}{\Gamma\left(\sum_{j=0}^mk_j\beta_j+1\right)},\ \ k\geq m.
	\end{equation*}
	
	Next consider the case $n=m+1$. Let now $p^{\beta_{m+1}}(m+1,t)=\sum_{k=0}^{\infty}p^{\beta_{m+1}}_{k}(m+1,t)$ in (\ref{wqr2new}) and applying ADM, we get
	\begin{equation*}
	\sum_{k=0}^{\infty}p^{\beta_{m+1}}_{k}(m+1,t)=p^{\beta_{m+1}}(m+1,0)-\lambda\sum_{k=0}^{\infty}  \left(I_t^{\beta_{m+1}}p^{\beta_{m+1}}_{k}(m+1,t)-I_t^{\beta_{m}}p^{\beta_{m}}_{k}(m,t)\right).
	\end{equation*}
	Thus, $p^{\beta_{m+1}}_{0}(m+1,t)=p^{\beta_{m+1}}(m+1,0)=0$ and\\ $p^{\beta_{m+1}}_{k}(m+1,t)=-\lambda\left(I_t^{\beta_{m+1}}p^{\beta_{m+1}}_{k-1}(m+1,t)-I_t^{\beta_{m}}p^{\beta_{m}}_{k-1}(m,t)\right)$, $k\geq 1$. Hence,
	\begin{align*}
	p^{\beta_{m+1}}_{1}(m+1,t)&=-\lambda\left(I_t^{\beta_{m+1}}p^{\beta_{m+1}}_{0}(m+1,t)-I_t^{\beta_{m}}p^{\beta_{m}}_{0}(m,t)\right)=0,\\
	p^{\beta_{m+1}}_{2}(m+1,t)&=-\lambda\left(I_t^{\beta_{m+1}}p^{\beta_{m+1}}_{1}(m+1,t)-I_t^{\beta_{m}}p^{\beta_{m}}_{1}(m,t)\right)=0.
	\end{align*}
	Let
	\begin{equation*}
	p^{\beta_{m+1}}_{k-1}(m+1,t)=0,\ \ k-1<m+1.
	\end{equation*}
	Then
	\begin{equation*}
	p^{\beta_{m+1}}_{k}(m+1,t)=-\lambda\left(I_t^{\beta_{m+1}}p^{\beta_{m+1}}_{k-1}(m+1,t)-I_t^{\beta_{m}}p^{\beta_{m}}_{k-1}(m,t)\right)=0,\ \ k<m+1.
	\end{equation*}
	Now for $k\geq m+1$, we have
	\begin{align*}
	p^{\beta_{m+1}}_{m+1}(m+1,t)&=-\lambda\left(I_t^{\beta_{m+1}}p^{\beta_{m+1}}_{m}(m+1,t)-I_t^{\beta_{m}}p^{\beta_{m}}_{m}(m,t)\right)\\
	&=\lambda\underset{\Omega^{m}_{m}}{\sum}\frac{\lambda^mI_t^{\beta_{m}}t^{\sum_{j=0}^mk_j\beta_j}}{\Gamma\left(\sum_{j=0}^mk_j\beta_j+1\right)}\\
	&=\lambda^{m+1} \underset{\Omega^{m+1}_{m+1}}{\sum}\frac{t^{\sum_{j=0}^{m+1}k_j\beta_j}}{\Gamma\left(\sum_{j=0}^{m+1}k_j\beta_j+1\right)},\\
	p^{\beta_{m+1}}_{m+2}(m+1,t)&=-\lambda\left(I_t^{\beta_{m+1}}p^{\beta_{m+1}}_{m+1}(m+1,t)-I_t^{\beta_{m}}p^{\beta_{m}}_{m+1}(m,t)\right)\\
	&=-\lambda\left(\underset{\Omega^{m+1}_{m+1}}{\sum}\frac{\lambda^{m+1}t^{\sum_{j=0}^{m+1}k_j\beta_j+\beta_{m+1}}}{\Gamma\left(\sum_{j=0}^{m+1}k_j\beta_j+\beta_{m+1}+1\right)}+\underset{\Omega^{m+1}_{m}}{\sum}\frac{\lambda^{m+1}t^{\sum_{j=0}^mk_j\beta_j+\beta_{m}}}{\Gamma\left(\sum_{j=0}^mk_j\beta_j+\beta_{m}+1\right)}\right)\\
	&=-\lambda^{m+2}\underset{\Omega^{m+2}_{m+1}}{\sum}\frac{t^{\sum_{j=0}^{m+1}k_j\beta_j}}{\Gamma\left(\sum_{j=0}^{m+1}k_j\beta_j+1\right)}.
	\end{align*}
	Let
	\begin{equation*}
	p^{\beta_{m+1}}_{k-1}(m+1,t)=(-1)^{m+1}(-\lambda)^{k-1}\underset{\Omega^{k-1}_{m+1}}{\sum}\frac{t^{\sum_{j=0}^{m+1}k_j\beta_j}}{\Gamma\left(\sum_{j=0}^{m+1}k_j\beta_j+1\right)},\ \ k-1\geq m+1.
	\end{equation*}
	Then
	\begin{align*}
	p^{\beta_{m+1}}_{k}(m+1,t)&=-\lambda\left(I_t^{\beta_{m+1}}p^{\beta_{m+1}}_{k-1}(m+1,t)-I_t^{\beta_{m}}p^{\beta_{m}}_{k-1}(m,t)\right)\\
	&=-\lambda\left((-1)^{m+1}\underset{\Omega^{k-1}_{m+1}}{\sum}\frac{(-\lambda)^{k-1}t^{\sum_{j=0}^{m+1}k_j\beta_j+\beta_{m+1}}}{\Gamma\left(\sum_{j=0}^{m+1}k_j\beta_j+\beta_{m+1}+1\right)}\right.\\
	&\ \ \ \ \ \ \ \ \ \ \ \ \ \ \ \ \ \ \ \ \ \ \ \ \left.+(-1)^{m+1}\underset{\Omega^{k-1}_{m}}{\sum}\frac{(-\lambda)^{k-1}t^{\sum_{j=0}^mk_j\beta_j+\beta_{m}}}{\Gamma\left(\sum_{j=0}^mk_j\beta_j+\beta_{m}+1\right)}\right)\\
	&=(-1)^{m+1}(-\lambda)^k\underset{\Omega^{k}_{m+1}}{\sum}\frac{t^{\sum_{j=0}^{m+1}k_j\beta_j}}{\Gamma\left(\sum_{j=0}^{m+1}k_j\beta_j+1\right)},\ \ k\geq m+1.
	\end{align*}
	Therefore,
	\begin{align*}
	p^{\beta_{m+1}}(m+1,t)&=(-1)^{m+1}\sum_{k=m+1}^{\infty}(-\lambda)^k\underset{\Omega^{k}_{m+1}}{\sum}\frac{t^{\sum_{j=0}^{m+1}k_j\beta_j}}{\Gamma\left(\sum_{j=0}^{m+1}k_j\beta_j+1\right)},
	\end{align*}
	and thus the result holds for $n=m+1$. This completes the proof.
\end{proof}
\begin{remark}
An alternate expression for (\ref{wq2.4kky}) is
\begin{equation}\label{teaa}
p^{\beta_n}(n,t)=\lambda^nt^{\sum_{j=0}^{n-1}\beta_j}\sum_{k=0}^{\infty‎}(-\lambda)^k\underset{k_j\in\mathbb{N}_0}{\underset{\sum_{j=0}^nk_j=k}{\sum}}\frac{t^{\sum_{j=0}^nk_j\beta_j}}{\Gamma(1+k_n\beta_n+\sum_{j=0}^{n-1}(k_j+1)\beta_j)},\ \ n\geq0.
\end{equation}
\end{remark}
	From the above result the Laplace transform of the state probabilities of SDTFPP-II is obtained in the following form:
   \begin{align*}
	\tilde{p}^{\beta_n}(n,s)&=\int_0^{\infty}p^{\beta_n}(n,t)e^{-st}\,\mathrm{d}t\\
	&=(-1)^n\sum_{k=n}^{\infty‎}\underset{\Omega^k_n}{\sum}\frac{(-\lambda)^k}{s^{k_0\beta_0+k_1\beta_1+\cdots+k_n\beta_n+1}},\ \ \ \ \ \ \ \mathrm{(using}\ \mathrm{(}\ref{wq2.4kky}\mathrm{))}\\
	&=\frac{\lambda^n}{s^{1+\sum_{j=0}^{n-1}\beta_j}}\sum_{k=n}^{\infty‎}\underset{k_n\in\mathbb{N}_0,\ k_j\in\mathbb{N}_0\backslash\{0\},\ 0\leq j\leq n-1}{\underset{k_0+k_1+\cdots+k_n=k}{\sum}}\frac{(-\lambda)^{(k_0-1)+(k_1-1)+\cdots+(k_{n-1}-1)+k_n}}{s^{(k_0-1)\beta_0+(k_1-1)\beta_1+\cdots+(k_{n-1}-1)\beta_{n-1}+k_n\beta_n}}\\
	&=\frac{\lambda^ns^{\beta_n-1}}{s^{\sum_{j=0}^n\beta_j}}\sum_{k=0}^{\infty‎}\underset{l_j\in\mathbb{N}_0,\ 0\leq j\leq n}{\underset{l_0+l_1+\cdots+l_n=k}{\sum}}\frac{(-\lambda)^{l_0+l_1+\cdots+l_n}}{s^{l_0\beta_0+l_1\beta_1+\cdots+l_n\beta_n}},\ \ (l_n=k_n,\ l_j=k_j-1,\ j\neq n)\\
	&=\frac{\lambda^ns^{\beta_n-1}}{s^{\beta_0+\beta_1+\cdots+\beta_n}}\prod_{k=0}^{n}\sum_{l_k=0}^{\infty‎}\left(\frac{-\lambda}{s^{\beta_k}}\right)^{l_k}\\
	&=\frac{\lambda^ns^{\beta_n-1}}{s^{\beta_0+\beta_1+\cdots+\beta_n}}\prod_{k=0}^{n}\left(1+\frac{\lambda}{s^{\beta_k}}\right)^{-1}\\
	&=\frac{\lambda^ns^{\beta_n-1}}{\prod_{k=0}^n(s^{\beta_k}+\lambda)},
	\end{align*}
	which coincides with Theorem 3.1 of Garra {\it et al.} (2015).
\begin{corollary}
	Let $X_2$ be the first waiting time of SDTFPP-II. Then the distribution of $X_2$ is given by:
	\begin{equation}
	\mathrm{Pr}\{X_2>t\}=\mathrm{Pr}\{N_2(t,\lambda)=0\}=E_{\beta_0}(-\lambda t^{\beta_0}).
	\end{equation}
\end{corollary}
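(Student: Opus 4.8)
The plan is to reduce the statement to the $n=0$ case of Theorem~\ref{t2} and then recognize the resulting power series as a Mittag-Leffler function.

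First I would establish the event identity $\{X_2 > t\} = \{N_2(t,\lambda) = 0\}$. Since $X_2$ denotes the first waiting time of SDTFPP-II, the process stays in state $0$ throughout $[0,t]$ exactly when the first jump has not yet occurred, i.e. when $X_2 > t$; hence the two events coincide and
\begin{equation*}
\mathrm{Pr}\{X_2 > t\} = \mathrm{Pr}\{N_2(t,\lambda) = 0\} = p^{\beta_0}(0,t),
\end{equation*}
which gives the first equality in the claim.

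Next I would evaluate the closed form \eqref{wq2.4kky} of Theorem~\ref{t2} at $n=0$. In this case the index set $\Omega^k_0$ consists of the single tuple $(k_0)$ with $k_0 = k$, since there are no constrained coordinates $k_j$ in the empty range $0 \le j \le n-1 = -1$; the inner sum over $\Omega^k_0$ therefore collapses to a single term. This produces
\begin{equation*}
p^{\beta_0}(0,t) = \sum_{k=0}^{\infty} (-\lambda)^k \frac{t^{k\beta_0}}{\Gamma(1 + k\beta_0)} = \sum_{k=0}^{\infty} \frac{(-\lambda t^{\beta_0})^k}{\Gamma(k\beta_0 + 1)}.
\end{equation*}

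Comparing with the definition \eqref{fastt} of the Mittag-Leffler function shows that this series equals $E_{\beta_0}(-\lambda t^{\beta_0})$, which finishes the proof. I expect no genuine obstacle: the only points needing a word of justification are the probabilistic identity $\{X_2 > t\} = \{N_2(t,\lambda) = 0\}$ and the collapse of $\Omega^k_0$ to a singleton, both of which are immediate. As an alternative to invoking \eqref{wq2.4kky}, one may simply read off $p^{\beta_0}(0,t)$ from the $n=0$ computation in the proof of Theorem~\ref{t2}, where the term $p^{\beta_0}_k(0,t) = (-\lambda t^{\beta_0})^k/\Gamma(k\beta_0+1)$ was already obtained and summed.
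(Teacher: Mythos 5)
Your proposal is correct and follows exactly the route the paper intends: the corollary is stated as an immediate consequence of Theorem \ref{t2}, whose $n=0$ case (where the inner sum over $\Omega^k_0$ collapses to the single term $k_0=k$, as you note) yields $p^{\beta_0}(0,t)=\sum_{k=0}^{\infty}(-\lambda t^{\beta_0})^k/\Gamma(k\beta_0+1)=E_{\beta_0}(-\lambda t^{\beta_0})$, combined with the event identity $\{X_2>t\}=\{N_2(t,\lambda)=0\}$. Your alternative of reading the series directly off the $n=0$ step in the proof of Theorem \ref{t2} is the same computation and equally valid.
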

\begin{remark}
The distribution of TFPP can be obtained as a special case of Theorem \ref{t2} by substituting $\beta_n=\beta$ for all $n\geq0$ in (\ref{wq2.4kky}) (or in (\ref{teaa})) and observing that the cardinality of $\Omega^k_n$ is same as that of the set $\Theta^k_n$, {\it i.e.}, $\binom{k}{n}$. Further, the substitution $\beta=1$ gives the distribution of the classical homogeneous Poisson process.
\end{remark}
\begin{remark}
From the distributions of SDTFPP-I and SDTFPP-II, it is clear that these are two distinct processes as observed by Garra {\it et al.} (2015) using the respective Laplace transforms. Moreover, from (\ref{2.4kky}) and (\ref{wq2.4kky}), the following relationship between the state probabilities of SDTFPP-I and SDTFPP-II (see Eq. (3.4), Garra {\it et al.} (2015)) can be easily verified:
\begin{equation*}
p^{\alpha_n}(n,t)=\left\{
\begin{array}{ll}
I^{\alpha_n-\alpha_0}_t\mathrm{Pr}\{N_2(t,\lambda)=n\},\ \ \alpha_n-\alpha_0>0,\\\\
D^{\alpha_0-\alpha_n}_t\mathrm{Pr}\{N_2(t,\lambda)=n\},\ \ \alpha_n-\alpha_0<0,
\end{array}
\right.
\end{equation*}
where $D^\alpha_t$ denotes the Riemann-Liouville (RL) fractional derivative which is defined by $D^\alpha_tf(t):=\frac{\mathrm{d}}{\mathrm{dt}}I^{1-\alpha}_tf(t)$, $0<\alpha<1$.
\end{remark}

\section{State dependent fractional pure birth process}
Recently, a state dependent version of FPBP, namely SDFPBP, was introduced and studied by Garra {\it et al.} (2015). They obtained the Laplace transform of the state probabilities of SDFPBP. Here we obtain the explicit expressions for state probabilities of SDFPBP using ADM.

\begin{theorem}\label{tq2}
	Consider the following difference-differential equations governing the state probabilities of SDFPBP:
	\begin{equation}\label{qr2new}
	\partial_t^{\nu_n} p^{\nu_n}(n,t)=-\lambda_np^{\nu_n}(n,t)+\lambda_{n-1}p^{\nu_{n-1}}(n-1,t),\ \ 0<\nu_n\leq 1,\ \lambda_n>0,\ n\geq 1,
	\end{equation}
	with $p^{\nu_1}(1,0)=1$ and $p^{\nu_n}(n,0)=0$, $n\geq 2$. The solution of (\ref{qr2new}) is given by
	\begin{equation}\label{q2.4kky}
	p^{\nu_n}(n,t)=(-1)^{n-1}\frac{\lambda_1}{\lambda_n}\sum_{k=n-1}^{\infty‎}(-1)^k\underset{\Lambda^k_n}{\sum}\frac{t^{\sum_{j=1}^nk_j\nu_j}\prod_{j=1}^n\lambda_j^{k_j}}{\Gamma(1+\sum_{j=1}^nk_j\nu_j)},\ \ n\geq1,
	\end{equation}
	where 
	$
	\Lambda^k_n=\{(k_1,k_2,\ldots,k_n):\ \sum_{j=1}^nk_j=k,\ k_1\in\mathbb{N}_0,\ k_j\in\mathbb{N}_0\backslash\{0\},\ 2\leq j\leq n\}.
	$
\end{theorem}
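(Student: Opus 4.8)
The plan is to mirror the argument used for Theorems \ref{t1} and \ref{t2}: first recast the Caputo fractional system (\ref{qr2new}) as a linear Volterra integral system, and then generate its series solution via the Adomian decomposition method, whose recursion is purely linear here because no nonlinear operator is present. Applying the RL integral $I_t^{\nu_n}$ to both sides of (\ref{qr2new}) and using the identity $I^{\nu_n}_t\partial^{\nu_n}_tf(t)=f(t)-f(0)$ stated just before the theorem, I obtain
\begin{equation*}
p^{\nu_n}(n,t)=p^{\nu_n}(n,0)-\lambda_n I_t^{\nu_n}p^{\nu_n}(n,t)+\lambda_{n-1}I_t^{\nu_n}p^{\nu_{n-1}}(n-1,t),\ \ n\geq1,
\end{equation*}
with the convention $p^{\nu_0}(0,t)=0$. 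Writing $p^{\nu_n}(n,t)=\sum_{k=0}^{\infty}p^{\nu_n}_k(n,t)$, the ADM recursion becomes $p^{\nu_n}_0(n,t)=p^{\nu_n}(n,0)$ and, for $k\geq1$,
\begin{equation*}
p^{\nu_n}_k(n,t)=-\lambda_n I_t^{\nu_n}p^{\nu_n}_{k-1}(n,t)+\lambda_{n-1}I_t^{\nu_n}p^{\nu_{n-1}}_{k-1}(n-1,t).
\end{equation*}

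Next I would settle the base case $n=1$. Since $p^{\nu_0}(0,t)=0$, the source term drops out and the recursion collapses to $p^{\nu_1}_0(1,t)=1$ and $p^{\nu_1}_k(1,t)=-\lambda_1 I_t^{\nu_1}p^{\nu_1}_{k-1}(1,t)$; the power rule for $I_t^{\alpha}$ from the lemma in Section~3 then yields $p^{\nu_1}_k(1,t)=(-\lambda_1 t^{\nu_1})^k/\Gamma(k\nu_1+1)$, so $p^{\nu_1}(1,t)=E_{\nu_1}(-\lambda_1 t^{\nu_1})$. This matches (\ref{q2.4kky}) for $n=1$ since $\Lambda^k_1=\{(k)\}$ and the prefactor $\lambda_1/\lambda_1=1$. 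I would then compute the cases $n=2,3$ directly, both to exhibit the pattern and to observe that $p^{\nu_n}_k(n,t)$ vanishes for $k<n-1$ and that the two source terms populate $\Lambda^k_n$ in two disjoint ways.

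The heart of the proof is an induction on $n$. Assuming that $p^{\nu_m}_k(m,t)=0$ for $k<m-1$ and
\begin{equation*}
p^{\nu_m}_k(m,t)=(-1)^{m-1}\frac{\lambda_1}{\lambda_m}(-1)^k\underset{\Lambda^k_m}{\sum}\frac{t^{\sum_{j=1}^mk_j\nu_j}\prod_{j=1}^m\lambda_j^{k_j}}{\Gamma(1+\sum_{j=1}^mk_j\nu_j)},\quad k\geq m-1,
\end{equation*}
I substitute into the recursion at level $m+1$. The term $-\lambda_{m+1}I_t^{\nu_{m+1}}p^{\nu_{m+1}}_{k-1}(m+1,t)$ raises each exponent by $\nu_{m+1}$, i.e. increments $k_{m+1}$, thereby producing exactly the members of $\Lambda^k_{m+1}$ with $k_{m+1}\geq2$; the term $\lambda_m I_t^{\nu_{m+1}}p^{\nu_m}_{k-1}(m,t)$ attaches a fresh exponent $\nu_{m+1}$ (that is, sets $k_{m+1}=1$) to each index of $\Lambda^{k-1}_m$, supplying precisely the members with $k_{m+1}=1$. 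The disjoint union of these two families is $\Lambda^k_{m+1}$, which is the combinatorial identity driving the induction.

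The step demanding the most care, and the main obstacle, is checking that the scalar coefficients from the two sources coincide and assemble into the single prefactor $(-1)^m(\lambda_1/\lambda_{m+1})(-1)^k$ multiplying $\prod_{j=1}^{m+1}\lambda_j^{k_j}$. In the first term the factor $-\lambda_{m+1}$ cancels the $1/\lambda_{m+1}$ in the hypothesis and restores the correct power $\lambda_{m+1}^{k_{m+1}}$; in the second term the factor $\lambda_m$ cancels the $1/\lambda_m$ while the integration introduces the missing $\lambda_{m+1}$, simultaneously turning the prefactor $\lambda_1/\lambda_m$ into $\lambda_1/\lambda_{m+1}$. A brief check that the signs $(-1)^{m-1}(-1)^{k-1}$ from both sources reduce to $(-1)^m(-1)^k$ then confirms consistency. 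Once this alignment is verified, summing $p^{\nu_{m+1}}_k(m+1,t)$ over $k\geq m$ gives (\ref{q2.4kky}) for $n=m+1$ and closes the induction.
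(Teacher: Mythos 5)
Your proposal is correct and follows essentially the same route as the paper's proof: converting (\ref{qr2new}) to a Volterra-type integral equation via $I_t^{\nu_n}$, running the linear ADM recursion, verifying the base cases, and closing with the double induction on $n$ and $k$ in which the two source terms populate $\Lambda^k_{m+1}$ disjointly according to $k_{m+1}=1$ versus $k_{m+1}\geq 2$. The only difference is presentational — you state that combinatorial splitting and the coefficient/sign bookkeeping explicitly, while the paper carries them implicitly inside the merged sums — so no further comparison is needed.
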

\begin{proof}
	Applying RL integral $I^{\nu_n}_t$ on both sides of (\ref{qr2new}), we get
	\begin{equation}\label{qyth}
	p^{\nu_n}(n,t)=p^{\nu_n}(n,0)+I_t^{\nu_n}(-\lambda_np^{\nu_n}(n,t)+\lambda_{n-1}p^{\nu_{n-1}}(n-1,t)),\ \ n\geq 1.
	\end{equation}
	Note that $p^{\nu_{0}}(0,t)=0$ for $t\geq0$.
	
	For $n=1$, substitute $p^{\nu_1}(1,t)=\sum_{k=0}^{\infty}p^{\nu_1}_{k}(1,t)$ in (\ref{qyth}) and apply ADM to get
	\begin{equation*}
	\sum_{k=0}^{\infty}p^{\nu_1}_{k}(1,t)=p^{\nu_1}(1,0)-\lambda_1\sum_{k=0}^{\infty} I_t^{\nu_1} p^{\nu_1}_{k}(1,t).
	\end{equation*}
	Thus, $p^{\nu_1}_{0}(1,t)=p^{\nu_1}(1,0)=1$ and $p^{\nu_1}_{k}(1,t)=-\lambda_1 I_t^{\nu_1} p^{\nu_1}_{k-1}(1,t)$, $k\geq 1$. Hence,
	\begin{equation*}
	p^{\nu_1}_{1}(1,t)=-\lambda_1 I_t^{\nu_1} p^{\nu_1}_{0}(1,t)=-\lambda_1 I_t^{\nu_1} t^0=\frac{-\lambda_1 t^{\nu_1}}{\Gamma(\nu_1+1)},
	\end{equation*}
	and similarly 
	\begin{equation*}
	p^{\nu_1}_{2}(1,t)=\frac{(-\lambda_1 t^{\nu_1})^2}{\Gamma(2\nu_1+1)},\ \ p^{\nu_1}_{3}(1,t)=\frac{(-\lambda_1 t^{\nu_1})^3}{\Gamma(3\nu_1+1)}.
	\end{equation*}
	Let
	\begin{equation}
	p^{\nu_1}_{k-1}(1,t)=\frac{(-\lambda_1 t^{\nu_1})^{k-1}}{\Gamma((k-1){\nu_1}+1)}.
	\end{equation}
	Then
	\begin{equation*}
	p^{\nu_1}_{k}(1,t)=-\lambda_1 I_t^{\nu_1} p^{\nu_1}_{k-1}(1,t)=\frac{(-\lambda_1)^{k}}{\Gamma((k-1){\nu_1}+1)} I_t^{\nu_1} t^{(k-1){\nu_1}}=\frac{(-\lambda_1 t^{\nu_1})^{k}}{\Gamma(k\nu_1+1)},\ \ k\geq 0.
	\end{equation*}
	Therefore,
	\begin{equation}
	p^{\nu_1}(1,t)=\sum_{k=0}^{\infty}\frac{(-\lambda_1 t^{\nu_1})^{k}}{\Gamma(k{\nu_1}+1)},
	\end{equation}
	and thus the result holds for $n=1$.
	
	For $n=2$, substituting $p^{\nu_2}(2,t)=\sum_{k=0}^{\infty}p^{\nu_2}_{k}(2,t)$ in (\ref{qyth}) and applying ADM, we get
	\begin{equation*}
	\sum_{k=0}^{\infty}p^{\nu_2}_{k}(2,t)=p^{\nu_2}(2,0)+\sum_{k=0}^{\infty} I_t^{\nu_2} \left(-\lambda_{2}p^{\nu_2}_{k}(2,t)+\lambda_{1}p^{\nu_1}_{k}(1,t)\right).
	\end{equation*}
	Thus, $p^{\nu_2}_{0}(2,t)=p^{\nu_2}(2,0)=0$ and $p^{\nu_2}_{k}(2,t)=I_t^{\nu_2} \left(-\lambda_{2}p^{\nu_2}_{k-1}(2,t)+\lambda_{1}p^{\nu_1}_{k-1}(1,t)\right)$, $k\geq 1$.\\
	Hence,
	\begin{align*}
	p^{\nu_2}_{1}(2,t)&=I_t^{\nu_2} \left(-\lambda_{2}p^{\nu_2}_{0}(2,t)+\lambda_{1}p^{\nu_1}_{0}(1,t)\right)=\lambda_1 I_t^{\nu_2} t^0=\frac{\lambda_1t^{\nu_2}}{\Gamma({\nu_2}+1)},\\
	p^{\nu_2}_{2}(2,t)&=I_t^{\nu_2} \left(-\lambda_{2}p^{\nu_2}_{1}(2,t)+\lambda_{1}p^{\nu_1}_{1}(1,t)\right)\\
	&=I_t^{\nu_2}\left(-\frac{\lambda_1\lambda_2 t^{\nu_2}}{\Gamma(\nu_2+1)}-\frac{\lambda_1^2 t^{\nu_1}}{\Gamma(\nu_1+1)}\right)=-\frac{\lambda_1\lambda_2t^{2\nu_2}}{\Gamma(2\nu_2+1)}-\frac{\lambda_1^2t^{\nu_1+\nu_2}}{\Gamma(\nu_1+\nu_2+1)},\\
	p^{\nu_2}_{3}(2,t)&=I_t^{\nu_2} \left(-\lambda_{2}p^{\nu_2}_{2}(2,t)+\lambda_{1}p^{\nu_1}_{2}(1,t)\right)\\
	&=\frac{\lambda_1\lambda_2^2t^{3\nu_2}}{\Gamma(3\nu_2+1)}+\frac{\lambda_1^2\lambda_2 t^{\nu_1+2\nu_2}}{\Gamma(\nu_1+2\nu_2+1)}+\frac{\lambda_1^3 t^{2\nu_1+\nu_2}}{\Gamma(2\nu_1+\nu_2+1)}.
	\end{align*}
	Let us assume now
	\begin{equation}
	p^{\nu_2}_{k-1}(2,t)=-(-1)^{k-1}\underset{\Lambda^{k-1}_{2}}{\sum}\frac{\lambda_1^{k_1+1}\lambda_2^{k_2-1}t^{k_1\nu_1+k_2\nu_2}}{\Gamma\left(k_1\nu_1+k_2\nu_2+1\right)}.
	\end{equation}
	Then
	\begin{align*}
	p^{\nu_2}_{k}(2,t)&=I_t^{\nu_2} \left(-\lambda_{2}p^{\nu_2}_{k-1}(2,t)+\lambda_{1}p^{\nu_1}_{k-1}(1,t)\right)\\
	&=(-1)^{k-1} \left(\underset{\Lambda^{k-1}_{2}}{\sum}\frac{\lambda_1^{k_1+1}\lambda_2^{k_2}t^{k_1\nu_1+k_2\nu_2+\nu_2}}{\Gamma\left(k_1\nu_1+k_2\nu_2+\nu_2+1\right)}+\frac{\lambda_1^kt^{(k-1)\nu_1+\nu_2}}{\Gamma((k-1){\nu_1}+\nu_2+1)}\right)\\
	&=-(-1)^k\underset{\Lambda^{k}_{2}}{\sum}\frac{\lambda_1^{k_1+1}\lambda_2^{k_2-1}t^{k_1\nu_1+k_2\nu_2}}{\Gamma\left(k_1\nu_1+k_2\nu_2+1\right)},\ \ k\geq 1.
	\end{align*}
	Therefore,
	\begin{equation}
	p^{\nu_2}(2,t)=-\frac{\lambda_1}{\lambda_2}\sum_{k=1}^{\infty}(-1)^k\underset{\Lambda^{k}_{2}}{\sum}\frac{\lambda_1^{k_1}\lambda_2^{k_2}t^{k_1\nu_1+k_2\nu_2}}{\Gamma\left(k_1\nu_1+k_2\nu_2+1\right)},
	\end{equation}
	and thus the result holds for $n=2$.
	
	For $n=3$, substituting $p^{\nu_3}(3,t)=\sum_{k=0}^{\infty}p^{\nu_3}_{k}(3,t)$ in (\ref{qyth}) and applying ADM, we get
	\begin{equation*}
	\sum_{k=0}^{\infty}p^{\nu_3}_{k}(3,t)=p^{\nu_3}(3,0)+\sum_{k=0}^{\infty} I_t^{\nu_3} \left(-\lambda_3p^{\nu_3}_{k}(3,t)+\lambda_2p^{\nu_2}_{k}(2,t)\right).
	\end{equation*}
	Thus, $p^{\nu_3}_{0}(3,t)=p^{\nu_3}(3,0)=0$ and $p^{\nu_3}_{k}(3,t)=I_t^{\nu_3} \left(-\lambda_3p^{\nu_3}_{k-1}(3,t)+\lambda_2p^{\nu_2}_{k-1}(2,t)\right)$, $k\geq 1$.\\
	Hence,
	\begin{align*}
	p^{\nu_3}_{1}(3,t)&=I_t^{\nu_3} \left(-\lambda_3p^{\nu_3}_{0}(3,t)+\lambda_2p^{\nu_2}_{0}(2,t)\right)=0,\\
	p^{\nu_3}_{2}(3,t)&=I_t^{\nu_3} \left(-\lambda_3p^{\nu_3}_{1}(3,t)+\lambda_2p^{\nu_2}_{1}(2,t)\right)=I_t^{\nu_3}\left(\frac{\lambda_1\lambda_2 t^{\nu_2}}{\Gamma({\nu_2}+1)}\right)=\frac{\lambda_1\lambda_2t^{\nu_2+\nu_3}}{\Gamma(\nu_2+\nu_3+1)},\\
	p^{\nu_3}_{3}(3,t)&=I_t^{\nu_3} \left(-\lambda_3p^{\nu_3}_{2}(3,t)+\lambda_2p^{\nu_2}_{2}(2,t)\right)\\
	&=-\frac{\lambda_1\lambda_2\lambda_3t^{\nu_2+2\nu_3}}{\Gamma(\nu_2+2\nu_3+1)}-\frac{\lambda_1\lambda_2^2t^{2\nu_2+\nu_3}}{\Gamma(2\nu_2+\nu_3+1)}-\frac{\lambda_1^2\lambda_2t^{\nu_1+\nu_2+\nu_3}}{\Gamma(\nu_1+\nu_2+\nu_3+1)}.
	\end{align*}
	Let
	\begin{equation}
	p^{\nu_3}_{k-1}(3,t)=(-1)^{k-1}\underset{\Lambda^{k-1}_{3}}{\sum}\frac{\lambda_1^{k_1+1}\lambda_2^{k_2}\lambda_3^{k_3-1}t^{k_1\nu_1+k_2\nu_2+k_3\nu_3}}{\Gamma\left(k_1\nu_1+k_2\nu_2+k_3\nu_3+1\right)}.
	\end{equation}
	Then
	\begin{align*}
	p^{\nu_3}_{k}(3,t)&=I_t^{\nu_3} \left(-\lambda_3p^{\nu_3}_{k-1}(3,t)+\lambda_2p^{\nu_2}_{k-1}(2,t)\right)\\
	&=(-1)^{k}\left(\underset{\Lambda^{k-1}_{3}}{\sum}\frac{\lambda_1^{k_1+1}\lambda_2^{k_2}\lambda_3^{k_3}t^{k_1\nu_1+k_2\nu_2+k_3\nu_3+\nu_3}}{\Gamma\left(k_1\nu_1+k_2\nu_2+k_3\nu_3+\nu_3+1\right)}+\underset{\Lambda^{k-1}_{2}}{\sum}\frac{\lambda_1^{k_1+1}\lambda_2^{k_2}t^{k_1\nu_1+k_2\nu_2+\nu_3}}{\Gamma\left(k_1\nu_1+k_2\nu_2+\nu_3+1\right)}\right)\\
	&=(-1)^k\underset{\Lambda^{k}_{3}}{\sum}\frac{\lambda_1^{k_1+1}\lambda_2^{k_2}\lambda_3^{k_3-1}t^{k_1\nu_1+k_2\nu_2+k_3\nu_3}}{\Gamma\left(k_1\nu_1+k_2\nu_2+k_3\nu_3+1\right)},\ \ k\geq 2.
	\end{align*}
	Therefore,
	\begin{equation}
	p^{\nu_3}(3,t)=\frac{\lambda_1}{\lambda_3}\sum_{k=2}^{\infty}(-1)^k\underset{\Lambda^{k}_{3}}{\sum}\frac{\lambda_1^{k_1}\lambda_2^{k_2}\lambda_3^{k_3}t^{k_1\nu_1+k_2\nu_2+k_3\nu_3}}{\Gamma\left(k_1\nu_1+k_2\nu_1+k_3\nu_3+1\right)},
	\end{equation}
	and thus the result holds for $n=3$.
	
	Let $p^{\nu_m}(m,t)=\sum_{k=0}^{\infty}p^{\nu_m}_{k}(m,t)$ in (\ref{qyth}) and assume the result holds for $n=m>3$, {\it i.e.}, $p^{\nu_m}_{k}(m,t)=0$, $k<m-1$ and
	\begin{equation*}
	p^{\nu_m}_{k}(m,t)=(-1)^{m-1}(-1)^k\underset{\Lambda^{k}_{m}}{\sum}\frac{\lambda_1^{k_1+1}\lambda_2^{k_2}\ldots\lambda_{m-1}^{k_{m-1}}\lambda_m^{k_m-1}t^{\sum_{j=1}^mk_j\nu_j}}{\Gamma\left(\sum_{j=1}^mk_j\nu_j+1\right)},\ \ k\geq m-1.
	\end{equation*}
	For $n=m+1$, substituting $p^{\nu_{m+1}}(m+1,t)=\sum_{k=0}^{\infty}p^{\nu_{m+1}}_{k}(m+1,t)$ in (\ref{qyth}) and applying ADM, we get
	\begin{equation*}
	\sum_{k=0}^{\infty}p^{\nu_{m+1}}_{k}(m+1,t)=p^{\nu_{m+1}}(m+1,0)+\sum_{k=0}^{\infty} I_t^{\nu_{m+1}} \left(-\lambda_{m+1}p^{\nu_{m+1}}_{k}(m+1,t)+\lambda_{m}p^{\nu_{m}}_{k}(m,t)\right).
	\end{equation*}
	Thus, $p^{\nu_{m+1}}_{0}(m+1,t)=p^{\nu_{m+1}}(m+1,0)=0$ and\\ $p^{\nu_{m+1}}_{k}(m+1,t)=I_t^{\nu_{m+1}} \left(-\lambda_{m+1}p^{\nu_{m+1}}_{k-1}(m+1,t)+\lambda_{m}p^{\nu_{m}}_{k-1}(m,t)\right)$, $k\geq 1$. Hence,
	\begin{align*}
	p^{\nu_{m+1}}_{1}(m+1,t)&=I_t^{\nu_{m+1}} \left(-\lambda_{m+1}p^{\nu_{m+1}}_{0}(m+1,t)+\lambda_{m}p^{\nu_{m}}_{0}(m,t)\right)=0,\\
	p^{\nu_{m+1}}_{2}(m+1,t)&=I_t^{\nu_{m+1}} \left(-\lambda_{m+1}p^{\nu_{m+1}}_{1}(m+1,t)+\lambda_{m}p^{\nu_{m}}_{1}(m,t)\right)=0.
	\end{align*}
	Let
	\begin{equation*}
	p^{\nu_{m+1}}_{k-1}(m+1,t)=0,\ \ k-1<m.
	\end{equation*}
	Then
	\begin{equation*}
	p^{\nu_{m+1}}_{k}(m+1,t)=I_t^{\nu_{m+1}} \left(-\lambda_{m+1}p^{\nu_{m+1}}_{k-1}(m+1,t)+\lambda_{m}p^{\nu_{m}}_{k-1}(m,t)\right)=0,\ \ k<m.
	\end{equation*}
	Now for $k\geq m$, we have
	\begin{align*}
	p^{\nu_{m+1}}_{m}(m+1,t)&=I_t^{\nu_{m+1}} \left(-\lambda_{m+1}p^{\nu_{m+1}}_{m-1}(m+1,t)+\lambda_{m}p^{\nu_{m}}_{m-1}(m,t)\right)\\
	&=I_t^{\nu_{m+1}} \left(\underset{\Lambda^{m-1}_{m}}{\sum}\frac{\lambda_1^{k_1+1}\lambda_2^{k_2}\ldots\lambda_{m-1}^{k_{m-1}}\lambda_m^{k_m}t^{\sum_{j=1}^mk_j\nu_j}}{\Gamma\left(\sum_{j=1}^mk_j\nu_j+1\right)}\right)\\
	&=\underset{\Lambda^{m-1}_{m}}{\sum}\frac{\lambda_1^{k_1+1}\lambda_2^{k_2}\ldots\lambda_{m-1}^{k_{m-1}}\lambda_m^{k_m}t^{\sum_{j=1}^mk_j\nu_j+\nu_{m+1}}}{\Gamma\left(\sum_{j=1}^mk_j\nu_j+\nu_{m+1}+1\right)}\\
	&= \underset{\Lambda^{m}_{m+1}}{\sum}\frac{\lambda_1^{k_1+1}\lambda_2^{k_2}\ldots\lambda_{m}^{k_{m}}\lambda_{m+1}^{k_{m+1}-1}t^{\sum_{j=1}^{m+1}k_j\nu_j}}{\Gamma\left(\sum_{j=1}^{m+1}k_j\nu_j+1\right)}.
    \end{align*}
Consider next
	\begin{align*}
	p^{\nu_{m+1}}_{m+1}(m+1,t)&=I_t^{\nu_{m+1}} \left(-\lambda_{m+1}p^{\nu_{m+1}}_{m}(m+1,t)+\lambda_{m}p^{\nu_{m}}_{m}(m,t)\right)\\
	&=-\underset{\Lambda^{m}_{m+1}}{\sum}\frac{\lambda_1^{k_1+1}\lambda_2^{k_2}\ldots\lambda_{m}^{k_{m}}\lambda_{m+1}^{k_{m+1}}t^{\sum_{j=1}^{m+1}k_j\nu_j+\nu_{m+1}}}{\Gamma\left(\sum_{j=1}^{m+1}k_j\nu_j+\nu_{m+1}+1\right)}\\
	&\ \ \ \ \ \ \ \ \ \ \ \ \ \ \ \ \ -\underset{\Lambda^{m}_{m}}{\sum}\frac{\lambda_1^{k_1+1}\lambda_2^{k_2}\ldots\lambda_{m-1}^{k_{m-1}}\lambda_{m}^{k_{m}}t^{\sum_{j=1}^mk_j\nu_j+\nu_{m+1}}}{\Gamma\left(\sum_{j=1}^mk_j\nu_j+\nu_{m+1}+1\right)}\\
	&=-\underset{\Lambda^{m+1}_{m+1}}{\sum}\frac{\lambda_1^{k_1+1}\lambda_2^{k_2}\ldots\lambda_{m}^{k_{m}}\lambda_{m+1}^{k_{m+1}-1}t^{\sum_{j=1}^{m+1}k_j\nu_j}}{\Gamma\left(\sum_{j=1}^{m+1}k_j\nu_j+1\right)}.
	\end{align*}
	Let now
	\begin{equation*}
	p^{\nu_{m+1}}_{k-1}(m+1,t)=(-1)^{m+k-1}\underset{\Lambda^{k-1}_{m+1}}{\sum}\frac{\lambda_1^{k_1+1}\lambda_2^{k_2}\ldots\lambda_{m}^{k_{m}}\lambda_{m+1}^{k_{m+1}-1}t^{\sum_{j=1}^{m+1}k_j\nu_j}}{\Gamma\left(\sum_{j=1}^{m+1}k_j\nu_j+1\right)},\ \ k-1\geq m.
	\end{equation*}
	Then
	\begin{align*}
	p^{\nu_{m+1}}_{k}(m+1,t)&= I_t^{\nu_{m+1}} \left(-\lambda_{m+1}p^{\nu_{m+1}}_{k-1}(m+1,t)+\lambda_mp^{\nu_{m}}_{k-1}(m,t)\right)\\
	&=(-1)^{m}(-1)^{k}\left(\underset{\Lambda^{k-1}_{m+1}}{\sum}\frac{\lambda_1^{k_1+1}\lambda_2^{k_2}\ldots\lambda_{m}^{k_{m}}\lambda_{m+1}^{k_{m+1}}t^{\sum_{j=1}^{m+1}k_j\nu_j+\nu_{m+1}}}{\Gamma\left(\sum_{j=1}^{m+1}k_j\nu_j+\nu_{m+1}+1\right)}\right.\\
	&\ \ \ \ \ \ \ \ \ \ \ \ \ \ \ \ \ \ \ \ \ \ \ \ \ \ \ \ \left.+\underset{\Lambda^{k-1}_{m}}{\sum}\frac{\lambda_1^{k_1+1}\lambda_2^{k_2}\ldots\lambda_{m-1}^{k_{m-1}}\lambda_m^{k_m}t^{\sum_{j=1}^mk_j\nu_j+\nu_{m+1}}}{\Gamma\left(\sum_{j=1}^mk_j\nu_j+\nu_{m+1}+1\right)}\right)\\
	&=(-1)^{m}(-1)^k\underset{\Lambda^{k}_{m+1}}{\sum}\frac{\lambda_1^{k_1+1}\lambda_2^{k_2}\ldots\lambda_{m}^{k_{m}}\lambda_{m+1}^{k_{m+1}-1}t^{\sum_{j=1}^{m+1}k_j\nu_j}}{\Gamma\left(\sum_{j=1}^{m+1}k_j\nu_j+1\right)},\ \ k\geq m.
	\end{align*}
	Therefore,
	\begin{align*}
	p^{\nu_{m+1}}(m+1,t)&=(-1)^{m}\frac{\lambda_1}{\lambda_{m+1}}\sum_{k=m}^{\infty}(-1)^k\underset{\Lambda^{k}_{m+1}}{\sum}\frac{\lambda_1^{k_1}\lambda_2^{k_2}\ldots\lambda_{m+1}^{k_{m+1}}t^{\sum_{j=1}^{m+1}k_j\nu_j}}{\Gamma\left(\sum_{j=1}^{m+1}k_j\nu_j+1\right)},
	\end{align*}
	and thus the result holds for $n=m+1$. This completes the proof.
\end{proof}
\begin{remark}
The following alternate form of the state probabilities of SDFPBP can be obtained from (\ref{q2.4kky}):
\begin{equation*}
p^{\nu_n}(n,t)=\left(\prod_{j=1}^{n-1}\lambda_j^{k_j}\right)t^{\sum_{j=2}^n\nu_j}\sum_{k=0}^{\infty‎}(-1)^k\underset{k_j\in\mathbb{N}_0}{\underset{\sum_{j=0}^nk_j=k}{\sum}}\frac{t^{\sum_{j=1}^nk_j\nu_j}\prod_{j=1}^n\lambda_j^{k_j}}{\Gamma(1+k_1\nu_1+\sum_{j=2}^n(k_j+1)\nu_j)},\ \ n\geq0.
\end{equation*}
\end{remark}
The Laplace transform of the state probabilities of SDFPBP can be obtained from (\ref{q2.4kky}), {\it viz.},
	\begin{align*}
	\tilde{p}^{\nu_n}(n,s)&=\int_0^{\infty}p^{\nu_n}(n,t)e^{-st}\,\mathrm{d}t\\
	&=(-1)^{n-1}\frac{\lambda_1}{\lambda_n}\sum_{k=n-1}^{\infty‎}(-1)^k\underset{\Lambda^k_n}{\sum}\frac{\lambda_1^{k_1}\lambda_2^{k_2}\ldots\lambda_n^{k_n}}{s^{k_1\nu_1+k_2\nu_2+\cdots+k_n\nu_n+1}}\\
	&=\frac{\prod_{k=1}^{n-1}\lambda_k}{s^{\nu_2+\nu_3+\cdots+\nu_n+1}}\sum_{k=n-1}^{\infty‎}\underset{k_1\in\mathbb{N}_0,\ k_j\in\mathbb{N}_0\backslash\{0\},\ 2\leq j\leq n}{\underset{k_1+k_2+\cdots+k_n=k}{\sum}}\frac{(-\lambda_1)^{k_1}(-\lambda_2)^{k_2-1}\ldots(-\lambda_n)^{k_n-1}}{s^{k_1\nu_1+(k_2-1)\nu_2+\cdots+(k_n-1)\nu_n}}\\
	&=\frac{s^{\nu_1-1}\prod_{k=1}^{n-1}\lambda_k}{s^{\nu_1+\nu_2+\cdots+\nu_n}}\sum_{k=0}^{\infty‎}\underset{l_j\in\mathbb{N}_0,\ 1\leq j\leq n}{\underset{l_1+l_2+\cdots+l_n=k}{\sum}}\prod_{k=1}^{n}\left(\frac{-\lambda_k}{s^{\nu_k}}\right)^{l_k},\ \ (l_1=k_1,\ l_j=k_j-1,\ j\neq 1)\\
	&=\frac{s^{\nu_1-1}\prod_{k=1}^{n-1}\lambda_k}{s^{\nu_1+\nu_2+\cdots+\nu_n}}\prod_{k=1}^{n}\sum_{l_k=0}^{\infty‎}\left(\frac{-\lambda_k}{s^{\nu_k}}\right)^{l_k}\\
	&=\frac{s^{\nu_1-1}\prod_{k=1}^{n-1}\lambda_k}{s^{\nu_1+\nu_2+\cdots+\nu_n}}\prod_{k=1}^{n}\left(1+\frac{\lambda_k}{s^{\nu_k}}\right)^{-1}\\
	&=\frac{s^{\nu_1-1}\prod_{k=1}^{n-1}\lambda_k}{\prod_{k=1}^n(s^{\nu_k}+\lambda_k)},
	\end{align*}
	which coincides with Proposition 4.1 of Garra {\it et al.} (2015).
\begin{corollary}\label{cdbe}
	Let $Y_1,Y_2,\ldots,Y_n$ be $n$ independent random variables such that $Y_j$ follows Mittag-Leffler distribution (see Cahoy {\it et al.} (2010)) with distribution function $F_{Y_j}(t)=1-E_{\nu_j}(-\lambda_jt^{\nu_j})$, where $0<\nu_j\leq1$, $1\leq j\leq n$ and $\nu_1=\lambda_n=1$. Then, the density function of the convolution $W=Y_1+Y_2+\cdots+Y_n$ is given by
\begin{equation*}
f_W(t)=(-1)^{n-1}\lambda_1\sum_{k=n-1}^{\infty‎}(-1)^k\underset{\Lambda^k_n}{\sum}\frac{t^{k_1+\sum_{j=2}^nk_j\nu_j}\prod_{j=1}^{n-1}\lambda_j^{k_j}}{\Gamma(1+k_1+\sum_{j=2}^nk_j\nu_j)},\ \ t\geq0,
\end{equation*}
where $\Lambda^k_n=\{(k_1,k_2,\ldots,k_n):\ \sum_{j=1}^nk_j=k,\ k_1\in\mathbb{N}_0,\ k_j\in\mathbb{N}_0\backslash\{0\},\ 2\leq j\leq n\}.$
\end{corollary}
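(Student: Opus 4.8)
The plan is to mirror the Laplace-transform argument used in Corollary~\ref{cdb}: I would compute the Laplace transform of $f_W$ as a product of the individual transforms, recognise it as the transform (\ref{lt3}) of the SDFPBP state probability under the stated specialisation, and then read off the density from Theorem~\ref{tq2}. First I would record the Laplace transform of the density of a single $Y_j$. Since $\mathrm{Pr}\{Y_j>t\}=E_{\nu_j}(-\lambda_j t^{\nu_j})$ and $\int_0^\infty e^{-st}E_{\nu_j}(-\lambda_j t^{\nu_j})\,\mathrm{d}t=s^{\nu_j-1}/(s^{\nu_j}+\lambda_j)$, the relation $\tilde f_{Y_j}(s)=1-s\int_0^\infty e^{-st}\mathrm{Pr}\{Y_j>t\}\,\mathrm{d}t$ (valid because $\mathrm{Pr}\{Y_j>0\}=1$) yields
\begin{equation*}
\tilde f_{Y_j}(s)=\mathbb{E}(e^{-sY_j})=\frac{\lambda_j}{s^{\nu_j}+\lambda_j},\ \ s>0.
\end{equation*}
This is the one genuine point of difference from Corollary~\ref{cdb}: because the Cahoy {\it et al.} (2010) parametrisation carries the scale $\lambda_j$ inside the Mittag-Leffler function, the single-variable transform is $\lambda_j/(s^{\nu_j}+\lambda_j)$ rather than $1/(1+s^{\nu_j})$.

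By independence the Laplace transform of the convolution factorises, giving
\begin{equation*}
\tilde f_W(s)=\prod_{j=1}^n\mathbb{E}(e^{-sY_j})=\frac{\prod_{j=1}^n\lambda_j}{\prod_{j=1}^n(s^{\nu_j}+\lambda_j)},\ \ s>0.
\end{equation*}
Next I would impose the hypotheses $\nu_1=1$ and $\lambda_n=1$. The first makes the factor $s^{\nu_1-1}$ appearing in (\ref{lt3}) equal to $1$, while the second collapses $\prod_{j=1}^n\lambda_j$ to $\prod_{j=1}^{n-1}\lambda_j$; comparing with (\ref{lt3}) then shows that $\tilde f_W(s)=\tilde p^{\nu_n}(n,s)$ exactly, with no residual power of $s$ left over.

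By the uniqueness of the Laplace transform it follows that $f_W(t)=p^{\nu_n}(n,t)$, so the explicit series (\ref{q2.4kky}) of Theorem~\ref{tq2} already delivers $f_W$, and no separate term-by-term inversion is needed. The last step is the routine simplification of (\ref{q2.4kky}) under $\nu_1=1$ and $\lambda_n=1$: the prefactor $\lambda_1/\lambda_n$ becomes $\lambda_1$, the exponent $\sum_{j=1}^nk_j\nu_j$ becomes $k_1+\sum_{j=2}^nk_j\nu_j$, and the product $\prod_{j=1}^n\lambda_j^{k_j}$ reduces to $\prod_{j=1}^{n-1}\lambda_j^{k_j}$ since $\lambda_n^{k_n}=1$, producing precisely the claimed expression for $f_W$ together with the index set $\Lambda^k_n$. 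I expect no real obstacle here; the only items requiring care are getting the scaled transform $\lambda_j/(s^{\nu_j}+\lambda_j)$ correct and checking that the two specialisations $\nu_1=1$ and $\lambda_n=1$ align the prefactors of $\tilde f_W$ and (\ref{lt3}) so that the match is an exact equality rather than an equality up to a power of $s$.
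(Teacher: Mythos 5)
Your proposal is correct and takes essentially the same route as the paper's proof: compute $\tilde f_{Y_j}(s)=\lambda_j/(s^{\nu_j}+\lambda_j)$, factorise $\tilde f_W$ by independence, match it with the SDFPBP Laplace transform (\ref{lt3}) under $\nu_1=\lambda_n=1$, and read off the density from (\ref{q2.4kky}). You are somewhat more explicit than the paper (which merely asserts the single-variable transform and says "the proof follows in view of (\ref{lt3}) and (\ref{q2.4kky})"), in particular in deriving the transform from the survival function and in checking that the two specialisations make the match exact, but the argument is the same.
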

\begin{proof}
	The Laplace transform of the density of Mittag-Leffler random variables $Y_j$'s is
	\begin{equation*}
	\tilde{f}_{Y_j}(s)=\mathbb{E}(e^{-sX_j})=\frac{\lambda_j}{\lambda_j+s^{\alpha_j}},\ \ s>0.
	\end{equation*}
	Hence, the Laplace transform of the convolution is given by
	\begin{equation*}
	\tilde{f}_{W}(s)=\mathbb{E}(e^{-sW})=\prod_{j=1}^{n}\mathbb{E}(e^{-sY_j})=\frac{\prod_{j=1}^{n-1}\lambda_j}{\prod_{j=1}^n(\lambda_j+s^{\alpha_j})},\ \ s>0.
	\end{equation*}
	The proof follows in view of (\ref{lt3}) and (\ref{q2.4kky}). 
\end{proof}
The above result coincides with Corollary \ref{cdb} on substituting $\lambda_j=1$, $j\geq1$. For the next result we note that the first event of SDFPBP occurs at time $t=0$.
\begin{corollary}
	Let $\mathcal{X}$ denote the time of second event for SDFPBP. Then the distribution of $\mathcal{X}$ is given by:
	\begin{equation}
	\mathrm{Pr}\{\mathcal{X}>t\}=\mathrm{Pr}\{\mathcal{N}(t,\lambda)=1\}=E_{\nu_1}(-\lambda_1 t^{\nu_1}).
	\end{equation}
\end{corollary}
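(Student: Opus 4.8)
The plan is to reduce the statement to the state probability $p^{\nu_1}(1,t)$ that has already been computed. First I would translate the event $\{\mathcal{X}>t\}$ into a statement about the sample paths of the process. As noted just above, the first event of SDFPBP occurs at time $t=0$, so by the initial condition $p^{\nu_1}(1,0)=1$ the process starts in state $1$, i.e.\ $\mathcal{N}(0,\lambda)=1$ almost surely. Since SDFPBP is a pure birth process, its paths are nondecreasing and leave state $1$ only at the instant of the second event. Consequently the second event has not occurred by time $t$ precisely when the process is still in state $1$ at time $t$, which gives the probabilistic identity
\begin{equation*}
\mathrm{Pr}\{\mathcal{X}>t\}=\mathrm{Pr}\{\mathcal{N}(t,\lambda)=1\}=p^{\nu_1}(1,t).
\end{equation*}

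Next I would invoke the explicit form of $p^{\nu_1}(1,t)$ established in the course of proving Theorem \ref{tq2}. The $n=1$ case there yields
\begin{equation*}
p^{\nu_1}(1,t)=\sum_{k=0}^{\infty}\frac{(-\lambda_1 t^{\nu_1})^{k}}{\Gamma(k\nu_1+1)}.
\end{equation*}
Comparing this power series with the definition (\ref{fastt}) of the Mittag-Leffler function, taking argument $x=-\lambda_1 t^{\nu_1}$ and order $\nu_1$, the right-hand side is exactly $E_{\nu_1}(-\lambda_1 t^{\nu_1})$. Chaining this with the previous display establishes the claimed formula.

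The only genuinely substantive point is the probabilistic identification in the first step, namely that the survival function of the second-event time coincides with the one-point state probability $p^{\nu_1}(1,t)$; once this is in hand, the remainder is a direct appeal to the $n=1$ computation inside Theorem \ref{tq2} and to the series definition of $E_{\nu_1}$. Because the process is monotone and transitions out of state $1$ happen only through a birth event, the equivalence $\{\mathcal{X}>t\}=\{\mathcal{N}(t,\lambda)=1\}$ is immediate, so I do not anticipate any real obstacle.
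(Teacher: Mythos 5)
Your proposal is correct and follows essentially the same route the paper intends: the paper states this corollary as an immediate consequence of Theorem \ref{tq2}, relying on exactly the identification $\{\mathcal{X}>t\}=\{\mathcal{N}(t,\lambda)=1\}$ (justified by the remark that the first event occurs at $t=0$, so the monotone process sits in state $1$ until the second event) together with the $n=1$ case of the theorem, whose series $\sum_{k=0}^{\infty}(-\lambda_1 t^{\nu_1})^{k}/\Gamma(k\nu_1+1)$ is recognized as $E_{\nu_1}(-\lambda_1 t^{\nu_1})$ via (\ref{fastt}). Your write-up just makes explicit the probabilistic step the paper leaves implicit; there is no gap.
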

The distribution of state dependent linear birth process (SDLBP) can be obtained by substituting $\lambda_n=\lambda n$, $n\geq1$, in (\ref{q2.4kky}).
\begin{corollary}
	The pmf of SDLBP is given by
	\begin{equation*}
	p^{\nu_n}_*(n,t)=\frac{(-1)^{n-1}}{n}\sum_{k=n-1}^{\infty‎}(-\lambda)^k\underset{\Lambda^k_n}{\sum}\frac{t^{\sum_{j=1}^nk_j\nu_j}\prod_{j=1}^nj^{k_j}}{\Gamma(1+\sum_{j=1}^nk_j\nu_j)},\ \ n\geq1.
	\end{equation*}
\end{corollary}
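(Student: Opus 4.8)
The plan is to obtain this corollary as a direct specialization of Theorem~\ref{tq2}. The state dependent linear birth process (SDLBP) arises from SDFPBP by choosing the birth rates $\lambda_n=\lambda n$ for $n\geq1$. Since these are strictly positive for every $n\geq1$, the hypotheses of Theorem~\ref{tq2} are satisfied, and so the explicit formula~(\ref{q2.4kky}) applies verbatim once we replace each $\lambda_n$ by $\lambda n$. Thus no new analysis is required; the work is purely the simplification of~(\ref{q2.4kky}) under this substitution.

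First I would simplify the multiplicative prefactor. Because $\lambda_1=\lambda$ and $\lambda_n=\lambda n$, we have $\lambda_1/\lambda_n=\lambda/(\lambda n)=1/n$, which produces the factor $1/n$ appearing in the claimed expression. Next I would rewrite the product over the birth rates: for each multi-index $(k_1,\dots,k_n)\in\Lambda^k_n$,
\[
\prod_{j=1}^n\lambda_j^{k_j}=\prod_{j=1}^n(\lambda j)^{k_j}=\lambda^{\sum_{j=1}^nk_j}\prod_{j=1}^nj^{k_j}=\lambda^k\prod_{j=1}^nj^{k_j},
\]
where the final equality invokes the defining constraint $\sum_{j=1}^nk_j=k$ of the index set $\Lambda^k_n$. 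Combining the extracted factor $\lambda^k$ with the sign $(-1)^k$ already present in~(\ref{q2.4kky}) yields $(-\lambda)^k$, and substituting everything back reproduces exactly the asserted formula for $p^{\nu_n}_*(n,t)$.

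There is no genuine obstacle here: the argument is a routine substitution into an already-proved closed form. The only point demanding a little care is the factoring step above, where the constraint $\sum_{j}k_j=k$ must be used to pull the common factor $\lambda^k$ out of the product before it is merged with the alternating sign $(-1)^k$. Once this bookkeeping is carried out, the identity follows immediately, completing the proof.
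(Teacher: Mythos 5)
Your proposal is correct and follows exactly the route the paper intends: the paper offers no separate proof, merely noting that the corollary follows by substituting $\lambda_n=\lambda n$ into (\ref{q2.4kky}), which is precisely the computation you carry out. Your handling of the prefactor $\lambda_1/\lambda_n=1/n$ and the factoring of $\lambda^k$ out of $\prod_{j=1}^n(\lambda j)^{k_j}$ via the constraint $\sum_{j=1}^n k_j=k$ to form $(-\lambda)^k$ is exactly the bookkeeping required.
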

The distribution of FPBP (see Orsingher and Polito (2010)) can be obtained by substituting $\nu_n=\nu$ for all $n\geq1$ in (\ref{q2.4kky}).
\begin{corollary}
	The pmf of FPBP is given by
	\begin{equation*}
	p^{\nu}(n,t)=(-1)^{n-1}\frac{\lambda_1}{\lambda_n}\sum_{k=n-1}^{\infty‎}\frac{(-t)^{k\nu}}{\Gamma\left(k\nu+1\right)}\underset{\Lambda^k_n}{\sum}\prod_{j=1}^n\lambda_j^{k_j},\ \ n\geq1.
	\end{equation*}
\end{corollary}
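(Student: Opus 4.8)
The plan is to derive this directly from the explicit formula (\ref{q2.4kky}) established in Theorem \ref{tq2}, specializing the orders $\nu_j$ to a common value $\nu$. First I would set $\nu_j=\nu$ for every $j$, $1\le j\le n$, in the expression
\begin{equation*}
p^{\nu_n}(n,t)=(-1)^{n-1}\frac{\lambda_1}{\lambda_n}\sum_{k=n-1}^{\infty}(-1)^k\underset{\Lambda^k_n}{\sum}\frac{t^{\sum_{j=1}^nk_j\nu_j}\prod_{j=1}^n\lambda_j^{k_j}}{\Gamma(1+\sum_{j=1}^nk_j\nu_j)}.
\end{equation*}

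The key observation is that for each tuple $(k_1,\ldots,k_n)\in\Lambda^k_n$ the defining constraint $\sum_{j=1}^nk_j=k$ forces $\sum_{j=1}^nk_j\nu_j=\nu\sum_{j=1}^nk_j=\nu k$. Hence both the power $t^{\sum_{j=1}^nk_j\nu_j}=t^{\nu k}$ and the Gamma factor $\Gamma(1+\sum_{j=1}^nk_j\nu_j)=\Gamma(1+\nu k)$ become independent of the particular tuple and depend only on the total $k$. Consequently these two factors may be pulled outside the inner sum over $\Lambda^k_n$, leaving only the product $\prod_{j=1}^n\lambda_j^{k_j}$ inside.

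Carrying out this factorization, I would combine $(-1)^k t^{\nu k}$ into the single power $(-t)^{k\nu}$ (with the convention $(-t)^{k\nu}=(-t^\nu)^k$ employed throughout the paper) to obtain
\begin{equation*}
p^{\nu}(n,t)=(-1)^{n-1}\frac{\lambda_1}{\lambda_n}\sum_{k=n-1}^{\infty}\frac{(-t)^{k\nu}}{\Gamma(k\nu+1)}\underset{\Lambda^k_n}{\sum}\prod_{j=1}^n\lambda_j^{k_j},
\end{equation*}
which is precisely the asserted pmf of FPBP. Since this amounts to a direct substitution followed by extracting a tuple-independent factor from an absolutely convergent series, there is no genuine analytic obstacle. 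The only point requiring a moment's care is to note that the index set $\Lambda^k_n$ is a purely combinatorial object unaffected by the specialization of the $\nu_j$, so that the structure of the sum is preserved and only the exponents collapse.
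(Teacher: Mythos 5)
Your proposal is correct and is essentially identical to the paper's own (very brief) justification: the paper simply states that the corollary follows by substituting $\nu_n=\nu$ for all $n\geq1$ into (\ref{q2.4kky}), which is exactly your observation that the constraint $\sum_{j=1}^nk_j=k$ on $\Lambda^k_n$ collapses every exponent to $\nu k$, letting $t^{\nu k}/\Gamma(\nu k+1)$ be pulled out of the inner sum. If anything, your write-up is more explicit than the paper, in particular in flagging that $(-t)^{k\nu}$ is to be read as $(-1)^k t^{k\nu}$.
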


\section{Concluding remarks}
Recently, certain state dependent fractional point processes were considered by Garra {\it et al.} (2015). They obtained the Laplace transforms for these processes, but did not provide the explicit expressions of the state probabilities. This is because inversion of those Laplace transforms are difficult. In this paper, we obtain the explicit expressions for the state probabilities of these processes using Adomian decomposition method. From the distributions obtained, we evaluate the Laplace transform in each case and show that they coincide with the results obtained in Garra {\it et al.} (2015). Some convolutions of the Mittag-Leffler distributions are obtained as the particular cases of our results.
%


\begin{thebibliography}{1}
\bibitem{Adomian1986}
Adomian, G. (1986). {\em Nonlinear Stochastic Operator Equations.} Academic Press, Orlando.

\bibitem{Beghin2009}
Beghin, L. and Orsingher, E. (2009). Fractional Poisson processes and related planar random motions. {\em Electron. J. Probab.} {\bf 14}(61), 1790-1827.

\bibitem{Cahoy2010}
Cahoy, D. O., Uchaikin, V. V. and Woyczynski, W. A. (2010). Parameter estimation for fractional Poisson processes. {\em J. Statist. Plann. Inference} {\bf 140}(11), 3106-3120.

\bibitem{Duan2011}
Duan, J.-S. (2011). Convenient analytic recurrence algorithms for the Adomian polynomials. {\em Appl. Math. Comput.} {\bf 217}(13), 6337-6348.

\bibitem{Garra2015}
Garra, R., Orsingher, E. and Polito, F. (2015). State-dependent fractional point processes. {\em J. Appl. Probab.} {\bf 52}(1), 18-36.

\bibitem{Kataria2016}
Kataria, K. K. and Vellaisamy, P. (2016). Simple parametrization methods for generating Adomian polynomials. {\em Appl. Anal. Discrete Math.} {\bf 10}(1), 168-185.

\bibitem{Kataria2017}
Kataria, K. K. and Vellaisamy, P. (2017). Saigo space-time fractional Poisson process via Adomian decomposition method. {\em Statist. Probab. Lett.} {\bf 129}, 69-80.
 
\bibitem{Kilbas2006}
Kilbas, A. A., Srivastava, H. M. and Trujillo, J. J. (2006). {\em Theory and Applications of Fractional Differential Equations.} Elsevier, Amsterdam.
 
\bibitem{Laskin2003}
Laskin, N. (2003). Fractional Poisson process. {\em Commun. Nonlinear Sci. Numer. Simul.} {\bf 8}(3-4), 201-213.

\bibitem{Orsingher2010}
Orsingher, E. and Polito, F. (2010). Fractional pure birth processes. {\em Bernoulli} {\bf 16}(3), 858-881.

\bibitem{Orsingher2012}
Orsingher, E. and Polito, F. (2012). The space-fractional Poisson process. {\em Statist. Probab. Lett.} {\bf 82}(4), 852-858.

\bibitem{Orsingher2017}
Orsingher, E., Ricciuti, C. and Toaldo, B. (2017). On semi-Markov processes and their Kolmogorov's integro-differential equations. {\em arXiv:1701.02905 [math.PR]}.

\bibitem{Pillai1990}
Pillai, R. N. (1990). On Mittag-Leffler functions and related distributions. {\em Ann. Inst. Statist. Math.} {\bf 42}(1), 157-161.

\bibitem{Polito2016}
Polito, F. and Scalas, E. (2016). A generalization of the space-fractional Poisson process and its connection to some L\'evy processes. {\em Electron. Commun. Probab.} {\bf 21}, 1-14.

\bibitem{Rach1984}
Rach, R. (1984). A convenient computational form for the Adomian polynomials. {\em J. Math. Anal. Appl.} {\bf 102}(2), 415-419.

\bibitem{Ricciuti2017}
Ricciuti, C. and Toaldo, B. (2017). Semi-Markov models and motion in heterogeneous media. {\em J. Stat. Phys.} {\bf 169}(2), 340-361.

\bibitem{Ross2010}
Ross, S. (2010). {\em A First Course in Probability.} Eighth Edition, Pearson Education, Inc., New Jersey.
\end{thebibliography}
\end{document}